\newtheorem{thm}{Theorem}[section]
\newtheorem{lem}[thm]{Lemma}
\newtheorem{cor}[thm]{Corollary}
\newtheorem{prop}[thm]{Proposition}
\theoremstyle{definition}
\newtheorem{exmp}[thm]{Example}
\newtheorem{defn}[thm]{Definition}
\theoremstyle{remark}
\newtheorem{rem}[thm]{Remark}
\newcommand{\Z}{\mathbb Z}
\newcommand{\Q}{\mathbb Q}
\newcommand{\Rad}{\operatorname{Rad}}
\newcommand{\Soc}{\operatorname{Soc}}
\newcommand{\End}{\operatorname{End}}
\newcommand{\Hom}{\operatorname{Hom}}
\newcommand{\Ext}{\operatorname{Ext}}
\newcommand{\Hecke}{\mathcal{H}}
\newcommand{\Schur}{\mathcal S}
\newcommand{\gr}{\operatorname{gr}}
\newcommand{\shift}{\operatorname{shift}}
\newcommand{\res}{\operatorname{res}}
\newcommand{\cmod}{\text{-}\operatorname{mod}}
\newcommand{\czmod}{\cmod^\Z}
\newcommand{\cmodr}{\operatorname{mod}\text{-}}
\newcommand{\czmodr}{\operatorname{mod}^\Z\text{-}}
\begin{document}
\title{Graded $q$-Schur algebras}
\author{Susumu Ariki}
%Matthew Fayers

\address{S.A.: Graduate School of Information Science and Technology, 
Osaka University, Toyonaka, Osaka 560-0043, Japan}
\email{ariki@math.sci.osaka-u.ac.jp}

\subjclass[2000]{Primary 17B37; Secondary 20C08,05E99}

\begin{abstract}
Generalizing recent work of Brundan and Kleshchev, we introduce grading 
on Dipper-James' $q$-Schur algebra, and prove a graded analogue 
of the Leclerc and Thibon's conjecture on the decomposition numbers of 
the $q$-Schur algebra when $q^2\neq1$ and $q^3\neq 1$.
\end{abstract}

\maketitle

\section{Introduction}

In \cite{KL}, Khovanov and Lauda introduced generalization of the degenerate 
affine nilHecke algebra of type A, in order to categorify $U_v^-(\mathfrak g)$, 
the negative half of the quantized enveloping algebra associated with a 
simply-laced quiver. The algebra is called the 
\emph{Khovanov-Lauda algebra}.\footnote{Rouquier also reached the 
definition in a different context \cite{Ro}, 
and the algebra is also called the Khovanov-Lauda-Rouquier algebra.}
They also proposed the study of cyclotomic Khovanov-Lauda algebras. 
Soon after that, Brundan and Kleshchev proved in \cite{BK1} 
that the cyclotomic Khovanov-Lauda algebras associated with a cyclic quiver are 
nothing but block algebras of the cyclotomic Hecke algebras of type $G(m,1,n)$ and, 
more recently, they proved the graded analogue of an old result of the author of 
this note \cite{A} in \cite{BK2}. The aim of this note is to introduce grading on the 
$q$-Schur algebra and obtain the graded analogue of the decomposition number 
conjecture for the $q$-Schur algebra considered in \cite{VV}. The main point here is to 
define suitable graded lifts and control the degree. 

We note that Mazorchuk and Stroppel already introduced graded $q$-Schur algebras 
\cite[Theorem 47]{MS} by using projective functors between blocks of the graded version of the 
BGG category in type $A$. There is another more recent work by Stroppel and Webster \cite{SW}. 
Our aim here is to obtain the result from the representation 
theory of Hecke algebras, which is the first step toward its generalization to higher levels. 

The author is grateful to Professor Khovanov and Dr. Lauda for some communication 
about the content of \cite{KL}, and to Professor Kleshchev for 
his comment that their proof in \cite{BKW} works for Specht modules but 
it does not apply to the permutation modules. 
This motivated the author to write this note. He also thanks  
Dr. Fayers for some communication. 
The research was carried out during the author's visit to the Isaac 
Newton Institute in Cambridge for attending the program Algebraic Lie Theory. 
He appreciates nice research environment he enjoyed there. 

\section{Preliminaries I; the Hecke algebra}

Let $F$ be a field, $q\in F^\times$ a primitive $e^{th}$ root of unity where $e\geq2$. 
The \emph{Hecke algebra} of type $A$, which we denote by $\Hecke_n$, 
is the $F$-algebra defined by 
generators $T_1,\dots,T_{n-1}$ and relations
$$
(T_i-q)(T_i+1)=0,\;\;T_iT_{i+1}T_i=T_{i+1}T_iT_{i+1},\;\;
T_iT_j=T_jT_i\;(\text{if $j\neq i\pm1$}).
$$
As the Artin braid relations hold, we have well-defined elements 
$T_w$, for $w\in S_n$, and they form an $F$-basis of $\Hecke_n$. 
We also have pairwise commuting elements $X_1,\dots,X_n$ which are 
defined by $X_1=1$, $X_{k+1}=q^{-1}T_kX_kT_k$, for $1\leq k<n$. 
They are invertible in $\Hecke_n$. 

The Hecke algebra $\Hecke_n$ has the $F$-algebra automorphism $\Psi$ of order $2$ 
which is defined by $T_i\mapsto q-1-T_i$. It sends $T_w$ to 
$(-q)^{\ell(w)}(T_{w^{-1}})^{-1}$, for $w\in S_n$. 

The Hecke algebra $\Hecke_n$ also has the 
anti-$F$-algebra automorphism of order $2$ that fixes the generators 
$T_i$. It sends $T_w$ to $T^*_w:=T_{w^{-1}}$, for $w\in S_n$.  

Let $I=\Z/e\Z$, and $\underline i=(i_1,\dots,i_n)\in I^n$. 
We call $\underline i$ a \emph{residue sequence}. 
The symmetric group $S_n$ acts on $I^n$ by place permutation. That is, 
$$
w\underline i=(i_{w^{-1}(1)},\dots,i_{w^{-1}(n)}),\;\;\text{for $w\in S_n$}.
$$
We denote by $s_k$ the transposition of $k$ and $k+1$. Thus, 
$$
s_k\underline i=(i_1,\dots,i_{k-1},i_{k+1},i_k,i_{k+2},\dots,i_n).
$$

We consider the commutative $F$-subalgebra of $\Hecke_n$ 
generated by $X_1,\dots,X_n$. Then we have 
primitive central idempotents $e(\underline i)$ of the $F$-subalgebra. The idempotent 
$e(\underline i)$ corresponds to the simultaneous eigenvalue 
$$
(X_1,\dots,X_n)\mapsto (q^{i_1},\dots,q^{i_n}).
$$
Thus, we have
$\sum_{\underline i\in I^n}e(\underline i)=1$ and 
$e(\underline i)e(\underline j)=\delta_{\underline i,\underline j}e(\underline i)$, 
for $\underline i, \underline j\in I^n$. 
Note that $e(\underline i)$ may be zero, 
and it is nonzero only when it comes from the residue sequence of a standard 
$\lambda$-tableau, for some $\lambda\vdash n$, 
by the Specht module theory.\footnote{Recall that if 
$k$ is located on the $a_k^{th}$ row and the $b_k^{th}$ column of a standard 
$\lambda$-tableau, the residue sequence associated with the tableau is defined by 
$i_k=-a_k+b_k\mod e$, for $1\leq k\leq n$.}
In particular, we always have $i_1=0$ (mod $e$) whenever $e(\underline i)\neq0$. 

Brundan and Kleshchev introduced the following elements
$t_1,\dots,t_n$ and $\sigma_1,\dots,\sigma_{n-1}$ in \cite{BK1}. 
The definition of $t_a$ is easy to state and it is 
$$
t_a=\sum_{\underline i\in I^n}(1-q^{-i_a}X_a)e(\underline i).
$$
Note that $t_1=0$ by the remark above. Then, \cite[Lemma 2.1]{BK1} implies that 
$t_2,\dots,t_n$ are nilpotent.\footnote{Dr. Lauda informed the author 
that he and Alex Hoffnung determined upperbound for the degree of nilpotency 
for cyclotomic Hecke algebras, and it implies $t_a=0$, 
for $1\leq a\leq e-1$, in our case.} The definition of $\sigma_1,\dots,\sigma_{n-1}$ 
is more involved. They introduce Laurent series 
$P_k(\underline i)$ and $Q_k(\underline i)$ in $t_1,\dots,t_n$ as follows. 
\begin{equation*}
\begin{split}
P_k(\underline i)&=\begin{cases}
1 \qquad&(\text{if $i_{k+1}=i_k$}),\\
(1-q)(1-q^{i_k-i_{k+1}}(1-t_k)(1-t_{k+1})^{-1})^{-1}\;\;&(\text{if $i_{k+1}\neq i_k$}),\\
\end{cases}\\
Q_k(\underline i)&=\begin{cases}
1-q+qt_{k+1}-t_k \quad&(\text{if $i_{k+1}=i_k$}),\\
q^{i_k} \qquad&(\text{if $i_{k+1}=i_k-1$}),\\
\frac{q^{i_k}(1-t_k)-q^{i_{k+1}+1}(1-t_{k+1})}
{(q^{i_k}(1-t_k)-q^{i_{k+1}}(1-t_{k+1}))^2} \;\;&(\text{if $i_{k+1}=i_k+1$}),\\
\frac{q^{i_k}(1-t_k)-q^{i_{k+1}+1}(1-t_{k+1})}
{q^{i_k}(1-t_k)-q^{i_{k+1}}(1-t_{k+1})} \;\;&(\text{if $i_{k+1}\neq i_k\pm1$}).
\end{cases}
\end{split}
\end{equation*}

Then these Laurent series well-define 
elements in $\Hecke_n$ by the nilpotency, and we define
$$
\sigma_k=\sum_{\underline i\in I^n}
(T_k+P_k(\underline i))Q^{-1}_k(\underline i)e(\underline i).
$$

The main result of \cite{BK1} stated in our case is the 
following.\footnote{In fact, the set of relations stated in [{\it loc. cit}], 
which are the Khovanov-Lauda relations, is slightly weaker, and hence their 
assertion is slightly stronger: we may deduce $t_1=0$ 
and $e(\underline i)=0$ whenever $i_1\neq 0$, from the Khovanov-Lauda relations.}
As we will need assume $e\ge4$ in later sections, we exclude the case 
$e=2$ in the following theorem. When $e=2$, the last two relations 
in the theorem must be modified. See \cite[Main Theorem]{BK1} for the details. 
Note that the theorem allows us to view $\Hecke_n$ as a $\Z$-graded $F$-algebra. 
We define 
$$
\deg(e(\underline i))=0,\;\;\deg(t_a)=2,\;\;
\deg(\sigma_ke(\underline i))=\begin{cases}
-2 \;\;(\text{if $i_k=i_{k+1}$}),\\
1 \;\;(\text{if $i_k-i_{k+1}=\pm1$}),\\
0 \;\;(\text{otherwise)}.\end{cases}
$$

\begin{thm}
Suppose that $e\geq3$. Then $\Hecke_n$ is defined by three sets of generators, 
which we call the \emph{Khovanov-Lauda generators},
$$
\begin{cases}
e(\underline i),\;\;\text{for $\underline i\in I^n$ such that $i_1=0$},\\
t_1,t_2,\dots,t_n,\;\;\text{where $t_1=0$},\\
\sigma_1,\dots,\sigma_{n-1},
\end{cases}
$$
and relations
\begin{gather*}
\sum_{\underline i\in I^n}e(\underline i)=1,\;\;
e(\underline i)e(\underline j)=\delta_{\underline i,\underline j}e(\underline i),\\[2pt]
t_at_b=t_bt_a,\;\;t_ae(\underline i)=e(\underline i)t_a,\;\;
\sigma_ke(\underline i)=e(s_k\underline i)\sigma_k,\\
\sigma_kt_a=t_a\sigma_k\;\text{if $a\neq k,k+1$,}\\
\sigma_kt_{k+1}-t_k\sigma_k=t_{k+1}\sigma_k-\sigma_kt_k=
\sum_{i_k=i_{k+1}}e(\underline i),\\
\sigma_k\sigma_l=\sigma_l\sigma_k\;\text{if $l\geq k+2$},\\
\sigma_k^2=\sum_{i_k-i_{k+1}\neq 0,\pm1}e(\underline i)+
\sum_{i_k-i_{k+1}=1}(t_k-t_{k+1})e(\underline i)+
\sum_{i_k-i_{k+1}=-1}(t_{k+1}-t_k)e(\underline i),\\
\sigma_k\sigma_{k+1}\sigma_k-\sigma_{k+1}\sigma_k\sigma_{k+1}=
\sum_{i_{k+2}=i_k=i_{k+1}-1}e(\underline i)-
\sum_{i_{k+2}=i_k=i_{k+1}+1}e(\underline i).
\end{gather*}
\end{thm}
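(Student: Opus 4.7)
The plan is to reduce the statement to the main theorem of Brundan--Kleshchev \cite{BK1}, which gives exactly this kind of presentation for $\Hecke_n$ in terms of the elements $e(\underline i)$, $t_a$, $\sigma_k$, but using the \emph{full} set of $e(\underline i)$ for $\underline i\in I^n$ and the full set $t_1,\dots,t_n$ as independent generators, subject to the Khovanov--Lauda relations for the cyclic quiver of type $A^{(1)}_{e-1}$. The task is then to translate this into the slightly reduced presentation stated here.

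First, I would verify that the explicit elements $e(\underline i)$, $t_a$, $\sigma_k$ do satisfy all the relations listed. This is the computation carried out in \cite{BK1}; its key technical input is \cite[Lemma 2.1]{BK1}, the nilpotency of $t_2,\dots,t_n$, which is what makes the Laurent series $P_k(\underline i)$ and $Q_k(\underline i)$ well-defined elements of $\Hecke_n$. The hypothesis $e\geq 3$ enters only to simplify the form of the quadratic and braid relations on $\sigma_k$; for $e=2$ the two coincident indices $i_k\pm 1$ collide and one must modify these two relations accordingly.

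Second, I would reduce the generating set. The two extra pieces of data in our presentation---namely $t_1=0$ and $e(\underline i)=0$ whenever $i_1\neq 0$---are genuine identities in $\Hecke_n$: the first because $X_1=1$ forces $t_1=\sum_{\underline i}(1-q^{-i_1})e(\underline i)=0$ (using the second identity), and the second because nonzero idempotents $e(\underline i)$ arise only from residue sequences of standard $\lambda$-tableaux, and these all begin with $0$ (the residue of the box $(1,1)$). Since \cite{BK1} already proves that the Khovanov--Lauda relations generate the full kernel of the map from the free algebra on the larger generating set to $\Hecke_n$, both of these identities are consequences of the relations in \cite{BK1}. Hence we may impose them as extra relations, eliminate the now-redundant generators $t_1$ and $e(\underline i)$ (for $i_1\neq 0$), and interpret every occurrence of these eliminated symbols (for instance in $\sigma_k e(\underline i)=e(s_k\underline i)\sigma_k$ when $k=1$ and $i_2\neq 0$) as zero. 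The resulting presentation is exactly the one stated.

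The main obstacle is, as expected, the computational verification sitting inside \cite{BK1}: checking the quadratic relation on $\sigma_k$ and the modified braid relation requires careful manipulation of $Q_k(\underline i)^{-1}$ across three consecutive indices, and rests on the interaction between the $X_a$-eigenvalue combinatorics and multiplication by $T_k$. Once that is granted, the residual work here is essentially bookkeeping---tracking the vanishing convention for the eliminated generators through each relation---and the $\Z$-grading is then well-defined because the given degrees are consistent with every relation, as can be checked relation by relation using that $\deg(e(\underline i))=0$ and the prescribed degrees of $t_a$ and $\sigma_k e(\underline i)$.
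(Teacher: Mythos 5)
Your proposal takes essentially the same route as the paper: the theorem is obtained by invoking the main theorem of \cite{BK1} and then performing exactly the Tietze-style elimination you describe, the paper noting (in a footnote) that $t_1=0$ and $e(\underline i)=0$ for $i_1\neq 0$ are consequences of the Khovanov--Lauda relations, so that the two presentations define the same algebra. Your justification of those two identities and of the consistency of the grading with the relations is correct, so the proposal is fine.
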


\begin{exmp}
\label{rank 2 example}
Suppose that $e\geq3$ as above. 
Define $\underline i_\pm=(0,\pm1)\in I^2$. Then, 
$\Hecke_2$ has the $F$-basis $e(\underline i_\pm)$ and 
$t_1=t_2=\sigma_1=0$.
\end{exmp}

\medskip
Let $A=\oplus_{k\in\Z} A_k$ be a finite dimensional graded $F$-algebra 
over a field $F$. We adopt the following convention throughout the paper. 

\begin{defn}
An $A$-module $M$ is a \emph{graded right $A$-module} if 
it is a $\Z$-graded vector space 
$$
M=\bigoplus_{l\in\Z} M_l
$$
such that $M_lA_k\subseteq M_{l+k}$. 

An $A$-module $M$ is a \emph{graded left $A$-module} 
if it is a $\Z$-graded vector space $M=\oplus_{l\in\Z} M_l$
such that $A_kM_l\subseteq M_{l-k}$. 

For right and left modules, the shift functor is defined by
$M[1]_k=M_{k+1}$, for $k\in\Z$. 
\end{defn}

We denote the category of finite dimensional graded right (resp. left) $A$-modules 
by $\czmodr A$ (resp. $A\czmod$). Here, we require homomorphisms to be degree preserving. 
As the Hecke algebra $\Hecke_n$ is a graded $F$-algebra now, we may consider 
the category of finite dimensional left (resp. right) graded $\Hecke_n$-modules. 

\begin{defn}
For a graded right (resp. left) $A$-module $M$, we denote 
$$
M^\circ=\bigoplus_{k\in\Z}(M^\circ)_k\;\;\text{where}\;\;
(M^\circ)_k=\Hom_F(M_k, F).
$$
Then $M^\circ$ is a graded left (resp. right) $A$-module in the natural way. 
We call $M^\circ$ the \emph{natural dual} of $M$. 
\end{defn}

The above definitions imply that $M[1]^\circ=M^\circ[1]$. In fact, we have
$$
(M[1]^\circ)_k=\Hom_F(M[1]_k, F)=\Hom_F(M_{k+1}, F)=(M^\circ)_{k+1}=(M^\circ[1])_k.
$$

The following basic facts on graded algebras will be 
used frequently in the rest of the paper without further notice. 

\begin{thm}
Let $A$ be a finite dimensional graded $F$-algebra over a field $F$ and let 
$For: \czmodr A \rightarrow \cmodr A$ be the forgetful functor. 
\begin{itemize}
\item[(a)]
A graded $A$-module $X$ is indecomposable if and only if 
$For(X)$ is indecomposable.
\item[(b)]
Let $X$ and $Y$ be indecomposable. Then $For(X)\simeq For(Y)$ 
if and only if $X\simeq Y[k]$, for some $k\in\Z$.
\end{itemize}
\end{thm}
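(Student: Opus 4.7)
The plan is to run everything through the graded endomorphism algebra. For a finite-dimensional graded $A$-module $X$ set $E := \End_{A\cmod}(For(X))$ with its natural $\Z$-grading $E_k := \Hom_{A\czmod}(X, X[k])$, so $E_0 = \End_{A\czmod}(X)$. Because $A\cmod$ and $A\czmod$ are Krull--Schmidt, indecomposability of $X$ (respectively of $For(X)$) is equivalent to locality of $E_0$ (respectively of $E$). Statement (a) thus amounts to ``$E$ is local iff $E_0$ is local'' and statement (b) amounts to the assertion that, for indecomposable $X, Y$, any ungraded isomorphism $For(X) \to For(Y)$ has a homogeneous component which is itself an isomorphism after a suitable shift of the target.

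The easy half of (a) is immediate: if $X = X_1 \oplus X_2$ is a nontrivial graded decomposition then $For(X) = For(X_1) \oplus For(X_2)$ is a nontrivial ungraded decomposition. For the other half I would first quote that the Jacobson radical of any finite-dimensional graded algebra is a homogeneous ideal, so $E/J(E)$ is graded and semisimple, and the quotient map restricts to a surjection $E_0 \twoheadrightarrow (E/J(E))_0$ with nilpotent kernel $E_0 \cap J(E)$. The key structural input is the claim that every finite-dimensional $\Z$-graded division $F$-algebra is concentrated in degree zero, and correspondingly, using that a finite-dimensional commutative $\Z$-graded algebra with no nonzero nilpotents is concentrated in degree zero, that every primitive central idempotent of a finite-dimensional $\Z$-graded semisimple algebra lies in degree zero. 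Both rest on the fact that the support of such an algebra in $\Z$ would have to be a finite subgroup, hence $\{0\}$. Granting this, $E/J(E) \simeq \prod_i M_{n_i}(D_i)$ as graded algebras with each $D_i$ in degree zero, and after choosing a homogeneous basis of each simple module, $(E/J(E))_0$ becomes block-diagonal, $\prod_{i,d} M_{m_{i,d}}(D_i)$. In particular $(E/J(E))_0$ is semisimple, so $E_0 \cap J(E) \supseteq J(E_0)$; combined with the reverse inclusion (which holds because $J(E)$ is nilpotent) this gives $E_0 \cap J(E) = J(E_0)$ and hence $(E/J(E))_0 = E_0/J(E_0)$. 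Assuming $E_0$ is local, this quotient is a division ring, which forces only a single block with $m_{i,d} = 1$ to survive, whence $E/J(E) \simeq D_i$ is itself a division ring and $E$ is local, proving (a).

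For (b), fix an $A$-module isomorphism $\phi \colon For(X) \to For(Y)$ and decompose it along the degree grading of the Hom-space as $\phi = \sum_k \phi_k$ with $\phi_k \in \Hom_{A\czmod}(X, Y[k])$, and likewise $\phi^{-1} = \sum_l \psi_l$ with $\psi_l \in \Hom_{A\czmod}(Y, X[l])$. Reading off the degree-zero part of $\phi \phi^{-1} = \Id_{For(Y)}$ gives $\sum_k \phi_k \psi_{-k} = \Id_Y$ in $\End_{A\czmod}(Y)$, which is local since $Y$ is graded-indecomposable. So some summand $\phi_{k_0} \psi_{-k_0}$ is invertible, making $\phi_{k_0} \colon X \to Y[k_0]$ split surjective; the dimension equality $\dim X = \dim Y[k_0]$ then upgrades it to an isomorphism, so $X \simeq Y[k_0]$. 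The converse of (b) is immediate since shift does not alter the underlying ungraded module.

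The main obstacle is the structural ingredient in (a), namely that finite-dimensional $\Z$-graded division algebras, and consequently the primitive central idempotents of finite-dimensional $\Z$-graded semisimple algebras, must live in degree zero. This is exactly where both the finite-dimensionality of $A$ and the choice of grading group $\Z$ enter; the analogous statement fails for gradings by a finite cyclic group, and with it the ``$E_0$ local implies $E$ local'' half of (a).
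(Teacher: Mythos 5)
Your argument is correct. Note first that the paper does not actually prove this theorem: it simply cites Gordon and Green \cite{GG1} (Theorems 3.2 and 4.1 there), so there is no in-paper proof to compare against line by line. Your route is the standard one and is essentially the same circle of ideas used in that reference: pass to $E=\End_{A\text{-}\operatorname{mod}}(For(X))=\bigoplus_k\Hom_{A\czmod}(X,X[k])$, use that the Jacobson radical of a finite dimensional $\Z$-graded algebra is homogeneous, and reduce (a) to the statement that a finite dimensional $\Z$-graded semisimple algebra has degree-zero part semisimple with the ``same'' blocks, which you correctly trace back to the fact that the support of a $\Z$-graded division algebra (or of a reduced commutative graded algebra) is a finite subgroup of $\Z$, hence trivial. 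The identification $E_0\cap J(E)=J(E_0)$ and the block count finishing (a), and the extraction of an invertible homogeneous component of an ungraded isomorphism in (b) using locality of $\End_{A\czmod}(Y)$, are both sound. The only place where you assert rather than prove is the graded Wedderburn step (that each simple block is a shifted matrix algebra over a degree-zero division ring after choosing a homogeneous basis of a graded simple module); this is true and follows from the same support-is-a-subgroup argument applied to $\bigoplus_k\Hom(S,S[k])$ for a graded simple $S$, so it is a routine elaboration rather than a gap. You are also right that the whole mechanism, and with it part (a), breaks down for gradings by finite cyclic groups.
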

\begin{proof}
See \cite[Theorem 3.2]{GG1} for (a) and \cite[Theorem 4.1]{GG1} for (b). 
\end{proof}

\medskip
We have $e(\underline i)^*=e(\underline i)$, 
$t_a^*=t_a$ but $\sigma_k^*\neq\sigma_k$. 
For the involution $\Psi$, we have 
$$
e(\underline i)\mapsto e(-\underline i)\;\;\text{and}\;\;
t_a\mapsto -\sum_{\underline i\in I^n}(1-t_a)^{-1}t_ae(-\underline i),
$$
but there is no explicit formula for $\Psi(\sigma_k)$. We want the setting 
where $\Psi$ is an isomorphism of graded algebras.  For the purpose, 
we define 
$$
e(\underline i)'=\Psi(e(\underline i)),\;\;
t_a'=\Psi(t_a),\;\;
\sigma_k'=\Psi(\sigma_k),
$$
and use these elements as new Khovanov-Lauda generators to give 
another graded $F$-algebra structure on $\Hecke_n$. We denote this 
graded Hecke algebra by $\Hecke_n'$. Then, we have the isomorphism 
of graded $F$-algebras
$$
\Psi: \Hecke_n\simeq \Hecke_n',
$$
by $e(\underline i)\mapsto e(\underline i)'$, 
$t_a\mapsto t_a'$, $\sigma_k\mapsto \sigma_k'$. 

To study the graded module theory for $\Hecke_n$, we have to introduce 
another anti-involution as follows. 

\begin{defn}
The anti-$F$-algebra automorphism of $\Hecke_n$ of order $2$ 
which fixes the Khovanov-Lauda generators is denoted by $h\mapsto h^\sharp$. Thus, 
$$
e(\underline i)^\sharp=e(\underline i),\;\;t_a^\sharp=t_a,\;\;
\sigma_k^\sharp=\sigma_k
$$
and $(h_1h_2)^\sharp=h_2^\sharp h_1^\sharp$, for $h_1, h_2\in\Hecke_n$. 
\end{defn}

\begin{defn}
Let $M$ be a graded right (resp. left) $\Hecke_n$-module. We 
define the graded left (resp. right) $\Hecke_n$-module 
$$
M^{-\sharp}=\bigoplus_{k\in\Z} M^{-\sharp}_k, \;\;\text{where $M^{-\sharp}_k=(M^\sharp)_{-k}$}
$$
and the $\Hecke_n$-action on $M^\sharp$ is obtained from $M$ by twisting the action by $\sharp$. 
\end{defn}

Note that $M\mapsto M^{-\sharp}$ anti-commutes with the shift. That is,  
$$
(M[1]^{-\sharp})_k=(M[1]^\sharp)_{-k}=(M^\sharp)_{-k+1}=(M^{-\sharp}[-1])_k.
$$

\begin{rem}
Introduce a filtration 
$$
0=F_0\subseteq F_1\subseteq\cdots\subseteq F_{n(n-1)/2}=\Hecke_n
$$
on $\Hecke_n$ by declaring that $F_\ell$, for $0\leq \ell\leq n(n-1)/2$, 
is the $F$-span of the products  
of generators $e(\underline i)$, $t_a$ and $\sigma_k$ such that 
$\sigma_1,\dots,\sigma_{n-1}$ appear in the product at most $\ell$ times in total. 
Define $\gr^F\Hecke_n$ to be the associated graded $F$-algebra. We denote 
the image of $\sigma_k$ in $\gr^F\Hecke_n$ by $\bar\sigma_k$. Then, 
we may well-define $\bar\sigma_w$, for $w\in S_n$, in $\gr^F\Hecke_n$ because 
the Artin braid relations hold in $\gr^F\Hecke_n$. 
We remark that $\{\bar\sigma_w \mid w\in S_n\}$ is not an $F$-basis of $\gr^F\Hecke_n$. 
We can only say that the elements $t_1^{a_1}\cdots t_n^{a_n}e(\underline i)\bar\sigma_w$, 
for $a_1,\dots,a_n\geq0$, $\underline i\in I^n$ and $w\in S_n$,  
span $\gr^F\Hecke_n$. Many of them are zero. 
This fact will cause a problem when we try to make the permutation modules 
of $\Hecke_n$ into graded modules, and we will appeal to a result by Hemmer and Nakano 
\cite{HN} to bypass this difficulty. 
\end{rem}

For each $w\in S_n$, we choose a reduced expression $w=s_{i_1}\cdots s_{i_{\ell(w)}}$ and 
define $\sigma_w=\sigma_{i_1}\cdots\sigma_{i_{\ell(w)}}$, which is a lift of 
$\bar\sigma_w\in\gr^F\Hecke_n$. Then, we have
$$
\Hecke_n=\sum_{a_1,\dots,a_n\geq0}\sum_{\underline i\in I^n}\sum_{w\in S_n}
Ft_1^{a_1}\cdots t_n^{a_n}e(\underline i)\sigma_w.
$$

\section{Preliminaries II; the $q$-Schur algebra}

For partitions and compositions we follow standard notation. 
We denote the transpose of a partition $\lambda$ by $\lambda^t$. 

For a composition $\mu=(\mu_1,\mu_2,\dots,\mu_r)\models n$, we have the Young subgroup 
$$
S_\mu=S_{\mu_1}\times\cdots\times S_{\mu_r}. 
$$
The number $r$ is denoted by $\ell(\mu)$ and called the \emph{length} or 
\emph{depth} of $\mu$. We define $x_\mu=\sum_{w\in S_\mu}T_w\in \Hecke_n$. 
The right $\Hecke_n$-module $M(\mu)=x_\mu\Hecke_n$ is called the 
\emph{permutation module} associated with $\mu$. Then, 
the \emph{$q$-Schur algebra} is defined by
$$
\Schur_{d,n}=\End_{\Hecke_n}(M), \quad\text{where}\;\;
M=\bigoplus_{\mu\models n,\ell(\mu)\leq d}\; M(\mu).
$$
Recall that, by applying the involution $\Psi$ to $x_\mu$, we obtain 
$$
y_\mu=\sum_{w\in S_\mu}(-q)^{-\ell(w)}T_w, 
$$
up to a nonzero scalar. The right $\Hecke_n$-module $N(\mu)=y_\mu\Hecke_n$ 
is called the \emph{signed permutation module} associated with $\mu$. 

By twisting the action on $M(\mu)$ by $\Psi$, we obtain 
the $\Hecke_n$-module $M(\mu)^\Psi$. Then $M(\mu)^\Psi\simeq N(\mu)$, 
so that we may consider that $M(\mu)$ and $N(\mu)$ 
have the same underlying vector space. Now, observe
$$
\Hom_{\Hecke_n}(M(\nu), M(\mu))=\Hom_{\Hecke_n}(N(\nu), N(\mu)).
$$
That is, $\varphi\in \Hom_F(M(\nu), M(\mu))$ belongs to 
$\Hom_{\Hecke_n}(M(\nu), M(\mu))$ if and only if it belongs to 
$\Hom_{\Hecke_n}(N(\nu), N(\mu))$. Thus, we have 
$$
\Schur_{d,n}=\End_{\Hecke_n}(N^\Psi)=\End_{\Hecke_n}(N),\quad\text{where}\;\;
N=\bigoplus_{\mu\models n,\ell(\mu)\leq d}\; N(\mu),
$$ 
as in \cite[Theorem 2.24]{DJ2}. 

\medskip
The $q$-Schur algebra is a factor algebra of the quantum algebra 
$U_q(\mathfrak{gl}_d)$ 
and the isomorphism classes of simple $\Schur_{d,n}$-modules are given by 
highest weight theory. We denote by $L(\lambda)$ the simple 
$\Schur_{d,n}$-module associated with a highest weight, 
or a partition, $\lambda\vdash n$. 

Recall also that the category $\cmodr\Schur_{d,n}$ is a highest weight category 
whose standard and costandard modules are given by 
Weyl modules $W(\lambda)$ and 
Schur modules $H^0(\lambda)$, respectively. 
The category has tilting modules $T(\lambda)$, 
which is the indecomposable $\Schur_{d,n}$-module with 
the property that 
\begin{itemize}
\item[(1)]
there is a monomorphsim $W(\lambda)\rightarrow T(\lambda)$ in $\cmodr\Schur_{d,n}$ 
such that the cokernel has Weyl filtration which uses only $W(\mu)$, for $\mu\triangleleft\lambda$, 
\item[(2)]
there is an epimorphism $T(\lambda)\rightarrow H^0(\lambda)$ 
in $\cmodr\Schur_{d,n}$ 
such that the kernel has Schur filtration\footnote{It is usually called 
good filtration.} which uses only $H^0(\mu)$, for $\mu\triangleleft\lambda$.
\end{itemize} 

\bigskip
\noindent
We consider the direct sum
$$
T=\bigoplus_{\mu\models n, \ell(\mu)\leq d} T(\mu).
$$
Then there are isomorphisms of $F$-algebras
$$
\End_{\Schur_{d,n}}(T)\simeq \End_{\Hecke_n}(N)=\End_{\Hecke_n}(N^\Psi)=\Schur_{d,n}
$$
and we have the functor
$$
F=\Hom_{\Schur_{d,n}}(T,-): \cmodr\Schur_{d,n} \rightarrow \cmodr\Schur_{d,n},
$$
which induces category equivalence between the full subcategory of Schur filtered 
$\Schur_{d,n}$-modules on the left and 
the full subcategory of Weyl filtered $\Schur_{d,n}$-modules on the right. 

We suppose $d\geq n$ throughout the paper. Thus, we have the projector 
$$
e: \bigoplus_{\mu\models n, \ell(\mu)\leq d} M(\mu) \rightarrow M((1^n)).
$$
It is an idempotent in $\Schur_{d,n}$. Then, we have $\Hecke_n\simeq e\Schur_{d,n}e$. 
The isomorphism is given by sending $h\in\Hecke_n$ to 
$\varphi_h\in \Hom_{\Hecke_n}(M((1^n)), M((1^n)))=e\Schur_{d,n}e$, for $h\in \Hecke_n$, 
where $\varphi_h$ is defined by $m\mapsto hm$, for $m\in M((1^n))$. 

The functor $\cmodr\Schur_{d,n} \rightarrow \cmodr\Hecke_n$ given by 
$$
X\mapsto Xe=X\otimes_{\Schur_{d,n}}\Schur_{d,n}e
$$ 
is called the \emph{Schur functor}. Note that the projector $e$ may be viewed as  
$$
e: \bigoplus_{\mu\models n, \ell(\mu)\leq d} N(\mu)^\Psi \rightarrow N((1^n))^\Psi.
$$
If the $\Schur_{d,n}$-module $X$ has the form
$X=\Hom_{\Hecke_n}(M,-)$, then 
$$
F(X)=\Hom_{\Schur_{d,n}}(T,X)\simeq \Hom_{\Hecke_n}(N,-)=\Hom_{\Hecke_n}(M,-^\Psi).
$$
\begin{rem}
The $q$-Schur algebra has the anti-involution $*$ which restricts to 
the $*$ on the Hecke algebra, and we may consider the dual 
$M^*=\Hom_F(M,F)$ of a $\Schur_{d,n}$-module $M$. The Schur functor 
commutes with taking duals \cite[p.83 Remarks]{D}, 
and $H^0(\lambda)\simeq W(\lambda)^*$, for $\lambda\vdash n$,   
by \cite[Proposition 4.1.6]{D}. 
\end{rem}

\begin{rem}
Let $P(\lambda)$ and $I(\lambda)$ be the projective cover and 
the injective envelope of a simple $\Schur_{d,n}$-module $L(\lambda)$, 
for $\lambda\vdash n$, respectively. Then, $P(\lambda)^*\simeq I(\lambda)$ 
\cite[4.3]{D}, and both $P(\lambda)$ and 
$I(\lambda)$ map to a self-dual $\Hecke_n$-module called 
the Young module associated with $\lambda$. Later, we will introduce 
graded Young modules $Y'(\lambda)$ for $\Hecke_n'$ and 
graded signed Young modules $Y_s(\lambda)$ for $\Hecke_n$. 
Then $Y'(\lambda)=Y_s(\lambda^t)^\Psi$. 
They are self-dual. That is, $Y'(\lambda)^*\simeq Y'(\lambda)$ 
and $Y_s(\lambda)^*\simeq Y_s(\lambda)$ if we forget the grading.  
\end{rem}

Let ${\bf t}^\mu$ be the \emph{canonical tableau} associated with 
$\mu\models n$: ${\bf t}^\mu$ is the row standard $\mu$-tableau such that $1,\dots,\mu_1$ are in the first row, $\mu_1+1,\dots,\mu_1+\mu_2$ are 
in the second row, etc. Then, a row standard tableau 
${\bf t}$ defines an element $d({\bf t})\in S_n$: if $k$ is the $(a_k,b_k)$-entry of 
${\bf t}^\mu$ then $d({\bf t})(k)$ is the $(a_k,b_k)$-entry of ${\bf t}$, 
for $1\leq k\leq n$. The element $d({\bf t})$ is the distinguished coset 
representative in $S_\mu d({\bf t})$. 

\begin{defn}
Let ${\bf s}$ and ${\bf t}$ be row standard $\mu$-tableaux. Then we define 
$$
m_{{\bf s}{\bf t}}=T^*_{d({\bf s})}x_\mu T_{d({\bf t})}.
$$
\end{defn}
Murphy showed that these elements for standard $\mu$-tableaux ${\bf s}$ and ${\bf t}$ 
for partitions $\mu\vdash n$ form a cellular basis of $\Hecke_n$ 
\cite[3.20]{M}. 

Recall that a \emph{tableau of weight $\nu$} is a tableau with 
$\nu_1$ $1$'s, $\nu_2$ $2$'s, etc. as entries. 

\begin{defn}
Let $\lambda\vdash n$ and $\nu\models n$. 
For a semistandard $\lambda$-tableau $S$ of weight $\nu$, we define $\nu^{-1}(S)$ 
to be the set of standard $\lambda$-tableaux ${\bf s}$ such that if we replace 
$1,\dots,\nu_1$ by $1$, $\nu_1+1,\dots, \nu_1+\nu_2$ by $2$, etc. then we obtain 
$S$. 
\end{defn}

\begin{defn}
Let $\lambda\vdash n$, and $\mu\models n$, $\nu\models n$. For a semistandard 
$\lambda$-tableau $S$ of weight $\mu$ and a semistandard $\lambda$-tableau $T$ 
of weight $\nu$, we define 
$$
m_{ST}=\sum_{{\bf s}\in\mu^{-1}(S)}\sum_{{\bf t}\in\nu^{-1}(T)}m_{{\bf s}{\bf t}}.
$$
\end{defn}

In particular, if $T$ is a standard $\lambda$-tableau ${\bf t}$ 
we have the element $m_{S,{\bf t}}$. 

\begin{thm}
\label{Murphy basis}
The elements $m_{S,{\bf t}}$, for semistandard 
$\lambda$-tableaux $S$ of weight $\mu$, where $\lambda$ runs through 
all partitions of $n$, form a basis of $M(\mu)$. 
\end{thm}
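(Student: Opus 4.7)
The plan is to combine Murphy's cellular basis of $\Hecke_n$ --- stated just above the theorem --- with a dimension count. There are three tasks: membership of each $m_{S\mathbf{t}}$ in $M^\mu$, $F$-linear independence of the proposed set, and spanning.

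For membership, write
$$
m_{S\mathbf{t}} = \Bigl(\sum_{\mathbf{s}\in\mu^{-1}(S)} T^*_{d(\mathbf{s})}\Bigr)\, x_\lambda\, T_{d(\mathbf{t})}.
$$
It suffices to show that the prefix admits $x_\mu$ as a left factor. Since $S$ is semistandard, the cells of $S$ containing the label $a$ form a horizontal strip, and the elements of $\mu^{-1}(S)$ parametrise the standard ways of filling these strips with the consecutive blocks $\{\mu_1+\cdots+\mu_{a-1}+1,\dots,\mu_1+\cdots+\mu_a\}$ of entries. Using length-additivity for the distinguished coset representatives of $S_\mu\backslash S_n/S_\lambda$, the product $T^*_{d(\mathbf{s})} x_\lambda$ expands as a plain sum $\sum_{w\in S_\lambda} T_{d(\mathbf{s})^{-1}w}$; one then checks that the set of elements $d(\mathbf{s})^{-1}w$, as $\mathbf{s}$ ranges over $\mu^{-1}(S)$ and $w$ over $S_\lambda$, decomposes into complete left $S_\mu$-cosets, yielding a factorisation $\sum_{\mathbf{s}\in\mu^{-1}(S)} T^*_{d(\mathbf{s})}\, x_\lambda = x_\mu\, h_S$ for some $h_S\in\Hecke_n$.

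Linear independence is then immediate from the expansion $m_{S\mathbf{t}}=\sum_{\mathbf{s}\in\mu^{-1}(S)} m_{\mathbf{s}\mathbf{t}}$: as $(\lambda,S,\mathbf{t})$ varies, the index sets $\mu^{-1}(S)\times\{\mathbf{t}\}$ are pairwise disjoint, so $F$-linear independence reduces to Murphy's cellular basis theorem. Spanning then follows from the classical Kostka identity
$$
\sum_{\lambda\vdash n} K_{\lambda\mu}\, f^\lambda \;=\; \frac{n!}{\mu_1!\cdots\mu_r!}\;=\;\dim_F M^\mu,
$$
where $K_{\lambda\mu}$ counts semistandard $\lambda$-tableaux of weight $\mu$ and $f^\lambda$ counts standard $\lambda$-tableaux; this matches the cardinality of the candidate basis to $\dim_F M^\mu$, forcing it to span.

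The main obstacle is the first step: cleanly converting the combinatorial description of $\mu^{-1}(S)$ into the algebraic factorisation through $x_\mu$. Once that is done, the remaining two steps are short bookkeeping given Murphy's basis theorem and the Kostka identity.
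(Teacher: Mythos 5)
Your three-step architecture --- membership of each $m_{S{\bf t}}$ in $M^\mu$, linear independence via the disjointness of the index sets $\mu^{-1}(S)\times\{{\bf t}\}$ inside Murphy's cellular basis, and the count $\sum_{\lambda\vdash n}K_{\lambda\mu}f^\lambda=n!/(\mu_1!\cdots\mu_r!)=\dim_F M^\mu$ --- is sound, and it is essentially the standard proof: the paper itself gives no argument beyond the citation to Mathas, Theorem 4.9, and what you outline is that proof. Steps two and three are complete as written; the only point worth adding there is that linear independence (and the count) also uses $\mu^{-1}(S)\neq\emptyset$, which holds because filling each horizontal strip of $S$ with its block of consecutive entries in order of increasing column index produces a standard tableau.

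The one place where you assert rather than argue is exactly the crux you flag: the claim that $\bigcup_{{\bf s}\in\mu^{-1}(S)}\,d({\bf s})^{-1}S_\lambda$ is a union of complete cosets of $S_\mu$. The claim is true, but ``one then checks'' is doing all the work of the theorem, so you should discharge it. The cleanest route is through the double-coset dictionary: under the usual bijection between $S_\lambda\backslash S_n/S_\mu$ and $\lambda$-tableaux of type $\mu$ with weakly increasing rows, the right $S_\lambda$-cosets contained in the double coset indexed by $S$ are precisely the cosets $S_\lambda d({\bf s})$ for the row standard ${\bf s}$ with $\mu({\bf s})=S$ (and when $S$ is semistandard these ${\bf s}$ are automatically standard, since column-strictness of $S$ forces column-strictness of ${\bf s}$; this reconciles your standard-tableau indexing with the row-standard one used in the double-coset combinatorics). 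Hence $\bigcup_{{\bf s}\in\mu^{-1}(S)}S_\lambda d({\bf s})=S_\lambda dS_\mu$ for a single $d$, and your union is its inverse $S_\mu d^{-1}S_\lambda$, which is manifestly left $S_\mu$-stable; taking the minimal-length representative $u$ in each coset $S_\mu u$ gives $\sum_{{\bf s}\in\mu^{-1}(S)}T^*_{d({\bf s})}x_\lambda=x_\mu\sum_u T_u$, completing the membership step. With that inserted, the proof is correct and complete.
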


See \cite[Theorem 4.9]{M} for the proof. 
We want to make the $q$-Schur algebra into a graded $F$-algebra. 
As $m_{S{\bf t}}$ form a basis of 
$M(\mu)$ by Theorem \ref{Murphy basis}, it is natural to expect that 
replacing $T_{d({\bf t})}$ with 
$\sigma_{d({\bf t})}$ in the definition of $m_{S{\bf t}}$, 
for row standard $\mu$-tableaux ${\bf t}$, would give a homogeneous 
basis of $M(\mu)$, which then would allow us to grade $M(\mu)$ and 
$\Schur_{d,n}$. 
However, this is not the case even in the $\Hecke_2$ case, as 
$\sigma_1=0$ there. $\Hecke_2$ has $e(\underline i_\pm)$ 
as a basis, so that we have to consider a basis of $M(\mu)$ obtained 
by not only using $\sigma_w$ but also using other Khovanov-Lauda 
generators. This is not easy to control in general. 

\begin{exmp}
Let $\underline i_\pm$ be as in Example \ref{rank 2 example}. Then, 
the basis elements $e(\underline i_\pm)$ act on permutation modules 
as follows. 
$$
m_{(2)}e(\underline i_+)=m_{(2)},\;\; m_{(2)}e(\underline i_-)=0,
$$
and
$$
m_{(1^2)}e(\underline i_+)=\frac{1}{q+1}m_{(1^2)},\;\; 
m_{(1^2)}e(\underline i_-)=m_{(1^2)}-\frac{1}{q+1}m_{(2)}.
$$
\end{exmp}

A right approach is to grade Young modules. 
Then, we may grade the permutation modules $M(\mu)$ by using 
decomposition into direct sum of Young modules, so that 
we have grading on $\Schur_{d,n}$. We also need the Ringel dual description of 
the $q$-Schur algebra. For this, we need grade signed Young modules. 
 
Before proceeding further, we recall the main result of \cite{BKW}. 
It says that the first idea which failed for the permutation modules $M(\mu)$ 
works for Specht modules $S(\lambda)$, and we obtain graded Specht modules. 
The difference from the permutation modules is the fact that 
$S(\lambda)$ is generated by 
the element $z_\lambda$, whose definition will be given below, 
and that $z_\lambda$ is a simultaneous eigenvector 
of $X_1,\dots,X_n$. 

Let $\mathcal N^{\triangleright\lambda}$ be the $F$-span of the elements 
$m_{{\bf s}{\bf t}}$ where ${\bf s}$ and ${\bf t}$ are standard 
$\mu$-tableaux for some $\mu\triangleright\lambda$. It is well-known that 
$\mathcal N^{\triangleright\lambda}$ is a two-sided ideal of $\Hecke_n$. 
Define the element $z_\lambda$ by 
$$
z_\lambda=x_\lambda+\mathcal N^{\triangleright\lambda}\in 
\Hecke_n/\mathcal N^{\triangleright\lambda}.
$$
The \emph{Specht module} associated with $\lambda$ is the 
right $\Hecke_n$-module $S(\lambda)=z_\lambda\Hecke_n$. 
As we already said, $z_\lambda$ is a simultaneous eigenvector 
of $X_1,\dots,X_n$, which implies that 
$S(\lambda)=\sum_{w\in S_n}Fz_\lambda\sigma_w$. 

\begin{rem}
Note that the Dipper-James' Specht module in \cite{DJ1}, 
which is identified with Donkin's Specht module $Sp(\lambda)$ in 
\cite[Proposition 4.5.8]{D}, is $S(\lambda^t)^\Psi\simeq S(\lambda)^*$ 
by \cite[Proposition 4.5.9]{D}. If $\lambda$ is $e$-restricted 
then $D(\lambda)=S(\lambda)/\Rad S(\lambda)$ is the simple 
$\Hecke_n$-module which is the image of $L(\lambda)$ under 
the Schur functor. 
\end{rem}

We consider the graded Hecke algebra $\Hecke_n$ and introduce 
graded Specht modules for $\Hecke_n$. 

%Let $T_i'=\Psi(T_i)$. By using $T'_i$'s, we define the element 
%$$
%z'_\lambda=x'_\lambda+\Psi(\mathcal N^{\triangleright\lambda})\in 
%\Hecke_n'/\Psi(\mathcal N^{\triangleright\lambda})
%$$
%and the Specht module $z'_\lambda\Hecke_n'$. 
%Note that $x'_\lambda$ is equal to $\Psi(y_\lambda)$ up to 
%a nonzero scalar multiple. 

We already know that $m_{\bf t}=z_\lambda T_{d({\bf t})}$, for standard 
$\lambda$-tableaux ${\bf t}$, form a basis of the Specht module. We 
fix a reduced expression for each $w\in S_n$, and 
define
$$
v_{\bf t}=z_\lambda\sigma_{d({\bf t})}. 
$$

For a standard tableau ${\bf t}$, 
denote by $x_k$, for $1\leq k\leq n$, the node occupied with $k$, 
and $\lambda_{\bf t}(k)$ the partition which consists of $x_1,\dots,x_k$. 
We view $x_k$ as a removable node of $\lambda_{\bf t}(k)$.  
We define $N_{\bf t}^b(k)$ to be 
the number of addable $\res(x_k)$-nodes of $\lambda_{\bf t}(k)$ which is 
strictly below $x_k$ minus 
the number of removable $\res(x_k)$-nodes of $\lambda_{\bf t}(k)$ which 
is strictly below $x_k$, for $1\leq k\leq n$. 
Then we declare that $v_{\bf t}$ is homogenous of degree 
$$
\deg(v_{\bf t})=\sum_{k=1}^n N_{\bf t}^b(k).
$$
The homogeneous basis depends on the choice of reduced expressions of 
$d({\bf t})$, but the grading on the Specht module defined 
by the grading of the homogeneous basis does not. 
This grading is compatible with the grading on $\Hecke_n$. 
Hence, the Specht modules are made into graded $\Hecke_n$-modules. 
See \cite[Theorem 4.10]{BKW} for the details of these statements. 

We consider this construction of Specht modules for $\Hecke_n'$ instead of $\Hecke_n$, 
and define as follows. 

\begin{defn}
We denote by $S'(\lambda)$ 
the graded $\Hecke_n'$-module defined above and call it 
the \emph{graded Specht module} for $\Hecke_n'$ associated with $\lambda\vdash n$. 
\end{defn}

We define ${S'}^{\rm left}(\lambda)\in \Hecke_n'\cmod$ by
$$
{S'}^{\rm left}(\lambda)=S'(\lambda)^{-\sharp}.
$$ 

\begin{defn}
We define the \emph{Dipper-James graded Specht module} $\tilde S(\lambda)$, 
for $\lambda\vdash n$, by
$$
\tilde S(\lambda)=S'(\lambda^t)^\Psi\in\czmodr\Hecke_n.
$$
\end{defn}

\begin{defn}
For each $\lambda\vdash n$, let
$$
\tilde S^{\rm left}(\lambda)={S'}^{\rm left}(\lambda^t)^\Psi\in\Hecke_n\czmod
$$
and we define the \emph{graded Specht module} $S(\lambda)$ by
$$
S(\lambda)=\tilde S^{\rm left}(\lambda)^\circ\in\czmodr\Hecke_n.
$$
\end{defn}

Our next task is to define 
$$
S^{\rm left}(\lambda)=\tilde S(\lambda)^\circ\in\Hecke_n\czmod
$$
and use it to define the graded signed Young module $Y_s^{\rm left}(\lambda)$ for $\Hecke_n$. 
To do this, we must assume that $e\geq4$. Hence, 
\begin{center}
{\it from now on, we assume that $e\geq4$ and $d\geq n$.}
\end{center}

\medskip
In \cite[4.3]{HN}, the authors explain how to construct Young modules 
in the way similar to construction of tilting modules for quasi-hereditary 
algebras. It is straightforward to modify the construction into our graded setting. 
Note also that as the Specht modules they use are Dipper-James' Specht modules, 
we apply the involution $\Psi$ everywhere and transpose 
partitions everywhere in [{\it loc. cit}]. 

Let $\lambda^{[0]}=\lambda$ and $W_0=S^{\rm left}(\lambda)$. Suppose that 
we have already constructed partitions 
$\lambda^{[0]},\dots, \lambda^{[s]}$ and 
graded $\Hecke_n$-modules $W_0,\dots,W_s\in\Hecke_n\cmod$. Then we choose 
$\lambda^{[s+1]}$ maximal with respect to the dominance order such that 
$$
a_{s+1}:=\sum_{k\in\Z} a_{s+1}[k]>0,\;\;
\text{where $a_{s+1}[k]=\dim_F\Ext_{\Hecke_n}^1(S^{\rm left}(\lambda^{[s+1]})[k], W_s)$}.
$$
Then we define $W_{s+1}$ by the corresponding short exact sequence
$$
0\rightarrow W_s \rightarrow W_{s+1} \rightarrow 
\bigoplus_{k\in\Z}(S^{\rm left}(\lambda^{[s+1]})[k])^{\oplus a_{s+1}[k]} \rightarrow 0.
$$
Note that $\lambda^{[s+1]}\triangleleft\lambda^{[t]}$, for some $t\leq s$. 
Otherwise, we have 
$$
\Ext_{\Hecke_n}^1(S^{\rm left}(\lambda^{[s+1]})[k],S^{\rm left}(\lambda^{[t]}))=0, 
$$
for all $k\in\Z$ and all $t\leq s$, by \cite[Proposition 4.2.1]{HN}, 
so that it implied $a_{s+1}=0$. 
As the poset of partitions $\lambda\vdash n$ is finite, the process must terminate 
after finitely many steps. We denote the resulting module $W_N$, for the terminal $N$, 
by $Y^{\rm left}_s(\lambda)$. Note that we have 
$$
\Ext_{\Hecke_n}^1(S^{\rm left}(\mu)[k],Y^{\rm left}_s(\lambda))=0, 
$$
for all $k\in\Z$ and for all $\mu\vdash n$. 

We define graded signed Young modules for $\Hecke_n$ as follows. 

\begin{defn}
The \emph{graded signed Young module} for $\Hecke_n$ is defined by 
$$
Y_s(\lambda)=Y^{\rm left}_s(\lambda)^\circ.
$$
\end{defn}

This definition is justified by the self-duality of the signed Young modules in the non-graded case 
and \cite[Theorem 4.6.2]{HN}. Then we define graded Young modules for $\Hecke_n'$ as follows. 

\begin{defn}
The \emph{graded Young module} for $\Hecke_n'$ is defined by 
$$
Y'(\lambda)=Y_s(\lambda^t)^\Psi. 
$$
\end{defn}

Note that the following propositions are clear by the relationship between 
Young modules and the signed Young modules. 

\begin{prop}
$For(Y_s(\lambda))$ is the signed Young module which is the image of the tilting 
$\Schur_{d,n}$-module $T(\lambda)$ under the Schur functor with respect to 
the $( \Schur_{d,n},\Hecke_n)$-bimodule structure. 
\end{prop}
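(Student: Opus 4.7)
The plan is to reduce the claim to its ungraded analogue and then apply \cite[Theorem 4.6.2]{HN}. For the reduction, I would use the natural isomorphism
$$
\Ext^1(For(S_l^\mu), For(W)) \simeq \bigoplus_{k\in\Z} \Ext^1(S_l^\mu[k], W),
$$
recalled just before the inductive construction of $Y_l^\lambda$, to show that the forgetful functor intertwines the graded inductive construction with its ungraded counterpart. Concretely, I would prove by induction on $s$ that $For(W_s)$ agrees with the $s$-th stage of the ungraded iterated extension starting from $For(S_l^\lambda)$: the maximality criterion that selects $\lambda^{[s+1]}$ depends only on the total dimension $a_{s+1}=\sum_k a_{s+1}[k]$, which matches on both sides by the displayed isomorphism, and the cokernel added at step $s+1$ is ungradedly $For(S_l^{\lambda^{[s+1]}})^{\oplus a_{s+1}}$. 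Hence $For(Y_l^\lambda)$ coincides with the terminal module of the ungraded procedure.

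Second, I would align this ungraded construction with the one in \cite[\S4.3]{HN}. As the excerpt warns, Hemmer--Nakano use dual Specht modules, so one twists by $\Psi$ and transposes all partition labels to convert to our ordinary-Specht setup; the two iterative processes then coincide on the nose, and the termination together with the ordering condition $\lambda^{[s+1]}\triangleleft \lambda^{[t]}$ (for some $t\le s$) used in our argument is precisely the translate of \cite[Proposition 4.2.1]{HN}. Applying \cite[Theorem 4.6.2]{HN} after this translation identifies $For(Y_l^\lambda)$ with the left $\Hecke_n$-module image of the tilting module $T(\lambda)\in \Schur_{d,n}\cmod$ under the Schur functor; the label is pinned down to $\lambda$ by the starting point $S_l^\lambda$ of the construction, together with the characterization of $T(\lambda)$ by the Weyl-filtration property recalled in \S3.

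Finally, passing to right modules by applying $(-)^\circ$ yields $For(Y^\lambda) = (For(Y_l^\lambda))^\circ$, since the forgetful functor commutes with the natural dual. Combining this with the facts that the Schur functor commutes with the $*$-duality \cite[p.83 Remarks]{D}, that tilting modules in $\Schur_{d,n}\cmod$ are $*$-self-dual, and that signed Young modules are self-dual in the ungraded case as recalled in the earlier remark, one obtains $For(Y^\lambda)$ as the Schur-functor image of $T(\lambda)$ with respect to the $(\Hecke_n,\Schur_{d,n})$-bimodule structure on $\Schur_{d,n}e$, as asserted. The main obstacle I foresee is the careful bookkeeping around the $\Psi$-twist, the transposition of partitions, and the interplay of the three dualities $*$, $\circ$, and $\sharp$: one must verify that after all these manipulations the partition indexing the output tilting module is really $\lambda$ itself, and not some twisted variant such as $\lambda'$ or a $\Psi$-twisted label.
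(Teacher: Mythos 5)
Your proposal is correct and follows essentially the same route as the paper, which justifies the proposition in one line by citing \cite[Theorem 4.6.2]{HN} (after the $\Psi$-twist and transposition of partitions) together with the self-duality of signed Young modules in the non-graded case to handle the passage from $Y_l^\lambda$ to $Y^\lambda=(Y_l^\lambda)^\circ$. Your write-up simply makes explicit the reduction to the ungraded construction via the displayed $\Ext^1$ isomorphism, which is exactly what the paper intends.
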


\begin{prop}
$For(Y'(\lambda))$ is the Young module which is the image of the 
indecomposable projective 
$\Schur_{d,n}'$-module $P'(\lambda)$ under the Schur functor with respect to 
the $( \Schur_{d,n}',\Hecke_n')$-bimodule structure. 
\end{prop}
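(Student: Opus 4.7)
The plan is to reduce to ungraded modules and combine the preceding proposition with the classical Donkin-type identification between Young and signed Young modules under the $\Psi$-twist.

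First, I would unwind the definition: $\tilde{Y'}^\lambda=(Y^{\lambda'})^\Psi$, so applying $For$ (which commutes with the $\Psi$-twist since $\Psi$ is an algebra isomorphism, not just a graded one) gives $For(\tilde{Y'}^\lambda)=For(Y^{\lambda'})^\Psi$. By the preceding proposition, $For(Y^{\lambda'})$ is the signed Young module realized as the image of $T(\lambda')$ under the first Schur functor, that is, the Schur functor attached to the $(\Hecke_n,\Schur_{d,n})$-bimodule structure on $\Schur_{d,n}e$.

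Next, I would invoke the Remark comparing the two Schur functor formulations, which asserts that on any $\Schur_{d,n}$-module the first Schur functor image is obtained from the second by the $\Psi$-twist. Applying $\Psi$ once more (using that $\Psi$ is an involution) converts $For(Y^{\lambda'})^\Psi$ into the image of $T(\lambda')$ under the second Schur functor, i.e., the one attached to the $(\Hecke_n',\Schur_{d,n})$-bimodule structure. Thus $For(\tilde{Y'}^\lambda)$ coincides with this second Schur functor image of $T(\lambda')$.

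Finally, I would appeal to the classical ungraded fact (due to Donkin for Schur algebras, extended to the $q$-Schur setting by Dipper--James): the isomorphism $N^\mu\cong (M^\mu)^\Psi$ induces a bijection between indecomposable summands that exchanges signed Young modules with Young modules of the conjugate partition, so that in the ungraded category $T(\lambda')$ and $P(\lambda)$ have the same image under the second Schur functor, namely the Young module associated to $\lambda$. This identifies $For(\tilde{Y'}^\lambda)$ with the image of $P(\lambda)$ under the Schur functor attached to the $(\Hecke_n',\Schur_{d,n})$-bimodule structure, as required.

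The main obstacle is this last step: it is the only genuinely non-formal input, since it relies on the classical (ungraded) correspondence between signed Young modules and Young modules with conjugate partition labels, which is not established in this paper but imported from the existing literature. Everything preceding that step is pure definitional unwinding of $(-)^\Psi$, $For$ and the two Schur functors, plus the preceding proposition.
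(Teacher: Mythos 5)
Your proposal is correct and follows the same route the paper intends: the paper's entire justification is the one-line remark that the claim is ``clear by the relationship between Young modules and the signed Young modules,'' and your argument simply makes explicit the definitional unwinding of $(-)^\Psi$ and the two Schur functors, together with the previous proposition and the classical Donkin--Dipper--James correspondence between signed Young modules and Young modules of conjugate partitions (which the paper also imports, via \cite[4.3, 4.4]{D}, rather than proves). Your identification of that last step as the only non-formal input is accurate.
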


\begin{defn}
By changing the role of $\Hecke_n$ and $\Hecke_n'$ in the above, we define the 
\emph{graded Young module} 
$$
Y(\lambda)\in \czmodr\Hecke_n.
$$
\end{defn}

Recall that the Young modules $For(Y(\lambda))$, for $\lambda\vdash n$, form 
a complete set of the isomorphism classes of indecomposable summands of 
$$
M=\bigoplus_{\mu\models n, \ell(\mu)\leq d} M(\mu),
$$
by \cite[4.4]{D}. 
Write $M(\mu)$ as a direct sum of $For(Y(\lambda))$'s, where 
only $\lambda$ with $\lambda\trianglerighteq\mu$ can appear by 
\cite[4.4]{D}. 
By replacing $For(Y(\lambda))$ with $Y(\lambda)$, we obtain 
the \emph{graded permutation module}, which we also denote by 
$M(\mu)$. We have proved the following theorem.

\begin{thm}
Suppose that $e\geq4$, and define 
$$
\Schur_{d,n}=\End(M):=\bigoplus_{k\in\Z}\Hom_{\Hecke_n}(M,M[k]).
$$
Then it is a $\Z$-graded $F$-algebra, and if we ignore the grading, it coincides with 
the $q$-Schur algebra. 
\end{thm}
\begin{proof}
As $M$ is a graded vector space, $\End_F(M)$ is graded. Thus, we need show that
if $f=\sum_{k\in\Z} f_k\in \End_F(M)$ commutes with the homogeneous generators of $\Hecke_n$, 
then so does each $f_k$. But, it is obvious.
\end{proof}

In the definition of $\Schur_{d,n}$, 
we may replace each $For(Y(\lambda))$ with any shift of $Y(\lambda)$. 
Different choice of the shifts leads to different grading on $\Schur_{d,n}$. 
We want that the grading on $\Schur_{d,n}$ is compatible with the grading 
on $\Hecke_n$, which we now explain. 

Observe that $\End_{\Hecke_n}(\Hecke_n)\simeq (\Hecke_n)_0$, the degree zero part of $\Hecke_n$. 
We write the identity $1\in\Hecke_n$ into sum of pairwise orthogonal 
primitive idempotents in $(\Hecke_n)_0$. Let $f$ be one of the primitive 
idempotents. Then, $f\Hecke_n\simeq Y(\lambda)[k]$, for some $\lambda\vdash n$ and 
some $k\in\Z$. We shall replace $f\Hecke_n$ with $Y(\lambda)[k]$. Namely, 
\begin{center}
{\it we choose the shifts so that $M((1^n))\simeq \Hecke_n$ 
in $\czmodr\Hecke_n$.} 
\end{center}

There still remain other choices of the shifts on other $Y(\lambda)$'s, but 
the graded $q$-Schur algebras are unique up to graded Morita equivalence by the following lemma. 

\begin{lem}
Let $A=\oplus_{k\in\Z} A_k$ be a finite dimensional graded $F$-algebra, 
$\{e_i\}_{i\in I}$ a set of idempotents of degree zero such that 
$$
\sum_{i\in I}e_i=1,\;\; e_ie_j=\delta_{ij}e_i.
$$
Let $\{s_i\}_{i\in\Z}$ be a set of integers. Then, we may  
define a new grading on $A$ by
$$
A=\bigoplus_{k\in\Z} A'_k, \quad\text{where}\;\;
A'_k=\bigoplus_{i,j\in I}e_iA_{k-s_i+s_j}e_j,
$$
and $A'$ is graded Morita equivalent to $A$. 
\end{lem}

\medskip
On the other hand, by replacing $For(Y_s(\lambda))$ with $Y_s(\lambda)$ under the same assumption that 
we choose the shifts so that $N((1^n))\simeq \Hecke_n$ in $\czmodr\Hecke_n$, 
we obtain the \emph{graded signed permutation module}, which we denote by 
$N(\mu)$. Then $Y_s(\lambda)$ can appear in $N(\mu)$ only if $\lambda\trianglerighteq\mu^t$. 
We define the $q$-Schur algebra $\Schur_{d,n}'$ for $\Hecke_n'$ as follows.
$$
\Schur_{d,n}'=\End(N^\Psi):=\bigoplus_{k\in\Z}\Hom_{\Hecke_n'}(N^\Psi,N^\Psi[k]).
$$
Note that if we ignore the grading, it coincides with the $q$-Schur algebra.

\section{Graded Schur functors}

Let $e\in\Schur_{d,n}$ be the projector
$$
M=\bigoplus_{\mu\models n, \ell(\mu)\leq d}\;M(\mu)\longrightarrow M((1^n)). 
$$
This is an idempotent and homogeneous of degree $0$. 

\begin{lem}
\label{graded isom}
We have the following isomorphism of graded $F$-algebras. 
$$
\Hecke_n\simeq \bigoplus_{k\in\Z}\Hom_{\Hecke_n}(M((1^n)),M((1^n))[k])
=e\Schur_{d,n}e.
$$
\end{lem}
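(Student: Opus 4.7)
The plan is to reduce everything to the shift convention fixed in Section 3, namely $N^{(1^n)}\simeq \Hecke_n$ in $\Hecke_n\czmod$.

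First, by the same reasoning as in the remark just preceding the lemma (but with $N^\mu$ in place of $M^\mu$ and the $\Hecke_n$-action in place of the $\Hecke_n'$-action), the definition $\Schur_{d,n}=\End_{\Hecke_n}(\oplus_\mu N^\mu)$, combined with the grading on the signed permutation modules constructed in Section 3, yields an identification of graded $F$-algebras
$$
\Schur_{d,n}=\bigoplus_{k\in\Z}\Hom\bigl(\oplus_\mu N^\mu,(\oplus_\mu N^\mu)[k]\bigr).
$$
The idempotent $e$ is homogeneous of degree $0$ because it is the projection onto the summand $N^{(1^n)}$ in the decomposition into signed Young modules. Sandwiching the displayed identification by $e$ on both sides then gives the graded algebra isomorphism
$$
e\Schur_{d,n}e=\bigoplus_{k\in\Z}\Hom(N^{(1^n)},N^{(1^n)}[k]),
$$
which is the second equality of the lemma.

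Next, I would use the chosen isomorphism $N^{(1^n)}\simeq \Hecke_n$ in $\Hecke_n\czmod$ to rewrite the right-hand side as $\bigoplus_{k\in\Z}\Hom_{\Hecke_n}(\Hecke_n,\Hecke_n[k])$, and then appeal to the standard identification of a graded algebra with the graded endomorphism ring of the free rank-one graded module: the map $L:h\mapsto L_h$, where $L_h(x)=hx$, sends $(\Hecke_n)_k$ bijectively onto $\Hom_{\Hecke_n}(\Hecke_n,\Hecke_n[k])$, its inverse being $\varphi\mapsto \varphi(1)$. It is an $F$-algebra homomorphism because $L_h\circ L_{h'}=L_{hh'}$. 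Composing this with the isomorphism obtained above produces the asserted chain of isomorphisms of graded $F$-algebras.

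There is no deep obstacle; the proof is essentially bookkeeping. The one point that requires care is the consistency of the shift and module conventions: one must check, using $M_lA_k\subseteq M_{l+k}$ and $M[k]_l=M_{l+k}$, that left multiplication by an element of $(\Hecke_n)_k$ really defines a homomorphism $\Hecke_n\to\Hecke_n[k]$ (rather than $\Hecke_n\to \Hecke_n[-k]$), so that the grading on $e\Schur_{d,n}e$ inherited from the graded tensor space matches the grading on $\Hecke_n$ fixed in Section 2. Once this is verified, both equalities of the lemma follow immediately.
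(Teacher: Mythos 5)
Your proposal is correct and follows essentially the same route as the paper: both start from the known ungraded isomorphism $h\mapsto\varphi_h$ (left multiplication) together with the fixed identification $N^{(1^n)}\simeq\Hecke_n$ in $\Hecke_n\czmod$, and then verify that left multiplication by a homogeneous element of degree $k$ yields a homomorphism into the $k$-shift. The only cosmetic difference is that the paper carries out this degree check by decomposing $1$ into primitive idempotents $f,f'$ of $(\Hecke_n)_0$ and observing $hf=f'h\in(N^{(1^n)})_k$, whereas you invoke the standard identification of a graded algebra with the graded endomorphism ring of its free rank-one module; the content is the same.
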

\begin{proof}
We look at the permutation module $M((1^n))$. We already know that 
if we ignore the grading, then 
$$
\Hecke_n\simeq e\Schur_{d,n}e=\End_{\Hecke_n}(M((1^n)))
$$ 
and the isomorphism is given by $h\mapsto \varphi_h$,  
the left multiplication by $h\in\Hecke_n$. As $x_{(1^n)}=1$, 
the multiplication by a homogeneous element of degree $k$, for $k\in\Z$, 
gives an endomorphism of degree $k$. To see this, we write the identity 
into the sum of pairwise orthogonal primitive idempotents 
in $(\Hecke_n)_0$ as before. Let $f$ and $f'$ be 
two of the primitive idempotents. Since $M((1^n))\simeq \Hecke_n$ as graded 
$\Hecke_n$-modules, 
we may consider $f$ and $f'$ as degree zero elements of 
$M((1^n))$. Let $h\in f'\Hecke_n f$ be homogeneous of degree $k$. Then 
$f'h\in M((1^n))_k$. Thus, $hf=f'h$ implies that the left multiplication by 
$h$ gives $f\Hecke_n \rightarrow f'\Hecke_n[k]$. We have the isomorphism
$$
\Hecke_n\simeq \bigoplus_{k\in\Z}\Hom_{\Hecke_n}(M((1^n)),M((1^n))[k])=e\Schur_{d,n}e
$$
of graded $F$-algebras. 
\end{proof}

\begin{cor}
We have $\Schur_{d,n}e\simeq M$ in $\czmodr\Hecke_n$.
\end{cor}
\begin{proof}
We consider the action on the vector space 
$$
\Schur_{d,n}e=\bigoplus_{k\in\Z}\Hom_{\Hecke_n}(M((1^n)),M[k]), 
$$
and Lemma \ref{graded isom} implies the result.
\end{proof}

Using the $(\Schur_{d,n}, \Hecke_n)$-bimodule structure of $\Schur_{d,n}e$, 
we define 
$$
\mathcal F=-\otimes_{\Schur_{d,n}}\Schur_{d,n}e:\: 
\czmodr\Schur_{d,n} \rightarrow \czmodr\Hecke_n.
$$
The degree $k$ part of $\mathcal F(X)$, for $X\in\czmodr\Schur_{d,n}$, is 
$X_ke$. 
We call the functor $\mathcal F$ the \emph{graded Schur functor}. 
The right adjoint functor is defined as follows. 
$$
\mathcal G=\bigoplus_{k\in\Z}\Hom_{\Hecke_n}(\Schur_{d,n}e[-k],-): 
\czmodr\Hecke_n\rightarrow \czmodr\Schur_{d,n}.
$$
Thus, the degree $k$ part of $\mathcal G(X)$ is 
$\Hom(\Schur_{d,n}e,X[k])$ and 
$\mathcal F\circ\mathcal G(X)\simeq X$, for 
$X\in\czmodr\Hecke_n$. 

For $\Schur_{d,n}'=\End(N^\Psi)$, we 
use the $(\Schur_{d,n}',\Hecke_n')$-bimodule structure on $\Schur_{d,n}'e$ to 
define the graded Schur functor
$$
\mathcal F'=-\otimes_{\Schur_{d,n}'}\Schur_{d,n}'e:\: 
\czmodr\Schur_{d,n}' \rightarrow \czmodr\Hecke_n'
$$
and the right adjoint functor
$$
\mathcal G'=\bigoplus_{k\in\Z}\Hom_{\Hecke_n'}(\Schur_{d,n}'e[-k],-): 
\czmodr\Hecke_n'\rightarrow \czmodr\Schur_{d,n}'.
$$
We have $\mathcal F'\circ\mathcal G'(X)\simeq X$, for 
$X\in\czmodr\Hecke_n'$. 

The following definition is justified by 
\cite[Theorem 3.4.2]{HN}. 

\begin{defn}
We define the \emph{graded Weyl module} by $W(\lambda)=\mathcal G(S(\lambda))$, and 
the \emph{graded tilting module} by $T(\lambda)=\mathcal G(Y_s(\lambda))$. 
\end{defn}

By \cite[Proposition 3.5]{GG1}, $\Rad W(\lambda)$ is a graded submodule of 
$W(\lambda)$ and we define the 
\emph{graded simple module} $L(\lambda)$ by
$$
L(\lambda)=W(\lambda)/\Rad W(\lambda). 
$$

\begin{rem}
If we follow the recipe for constructing tilting modules, 
we obtain certain shift of $T(\lambda)$, and it is rather 
complicated to determine the shift, and we do not use it to define graded tilting modules. 
\end{rem}

Next we introduce Schur modules. 

\begin{defn}
Let $W^{\rm left}(\lambda)$ be the left graded Weyl module. 
Define the \emph{graded Schur modules} $H^0(\lambda)$, 
for $\lambda\vdash n$, by
$$
H^0(\lambda)=(W^{\rm left}(\lambda))^\circ.
$$
\end{defn}

Graded Weyl, Schur and simple modules for $\Schur_{d,n}'$ are defined by
$$
W'(\lambda)=\mathcal G'(S'(\lambda)),\;\;{H'}^0(\lambda)=({W'}^{\rm left}(\lambda))^\circ,\;\;
L'(\lambda)=W'(\lambda)/\Rad W'(\lambda).
$$

\begin{defn}
We denote the projective cover of $L'(\lambda)$ by $P'(\lambda)$. 
\end{defn}

\begin{lem}
\label{projective cover}
We have $P'(\lambda)=\mathcal G'(Y'(\lambda))$, for $\lambda\vdash n$.
\end{lem}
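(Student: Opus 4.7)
The plan is to identify $\mathcal{G}_2(\tilde{Y'}^\lambda)$ concretely with a summand of the regular graded right $\Schur_{d,n}$-module cut out by a primitive idempotent of degree $0$, argue that this summand is indecomposable projective, and then match its graded simple head with $L'(\lambda)$.

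First, by the choice of shifts fixed just before the present section, $M^{(1^n)}\simeq\Hecke_n'$ in $\Hecke_n'\czmod$, and consequently $\Schur_{d,n}e\simeq V^{\otimes n}$ as graded $(\Schur_{d,n},\Hecke_n')$-bimodules. By the construction of the graded signed permutation modules (twisted through $\Psi$), $V^{\otimes n}$ decomposes in $\Hecke_n'\czmod$ into shifts of the graded Young modules $\tilde{Y'}^\mu$, so $\tilde{Y'}^\lambda$ appears as a summand cut out by a primitive idempotent $e_\lambda\in(\Schur_{d,n})_0$. Since $\End_{\Hecke_n'\czmod}(\Schur_{d,n}e)\simeq\Schur_{d,n}$ as graded algebras (from the earlier display $\Schur_{d,n}=\bigoplus_{k\in\Z}\Hom(V^{\otimes n},V^{\otimes n}[k])$ together with $V^{\otimes n}\simeq\Schur_{d,n}e$), the standard Hom-from-a-summand computation will give
$$
\mathcal{G}_2(\tilde{Y'}^\lambda)=\bigoplus_{k\in\Z}\Hom_{\Hecke_n'\czmod}(\Schur_{d,n}e,e_\lambda\Schur_{d,n}e[k])\simeq e_\lambda\Schur_{d,n}
$$
as graded right $\Schur_{d,n}$-modules, which is then an indecomposable graded projective since $e_\lambda$ is primitive of degree $0$.

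It then remains to show that the graded simple head of $e_\lambda\Schur_{d,n}$ is $L'(\lambda)$. Forgetting the grading, this is standard non-graded Schur functor theory: $\mathcal{F}_2(e_\lambda\Schur_{d,n})\simeq\tilde{Y'}^\lambda$, and under the classical bijection between indecomposable projective $\Schur_{d,n}$-modules and indecomposable summands of tensor space, the projective cover of $L(\lambda)$ is exactly the one mapping to $\tilde{Y'}^\lambda$. Hence $e_\lambda\Schur_{d,n}\simeq P'(\lambda)[k_\lambda]$ as graded modules for some $k_\lambda\in\Z$. To rule out a nonzero shift, I would apply $\mathcal{F}_2$ to the canonical graded surjection of $e_\lambda\Schur_{d,n}$ onto its head and then invoke the already-established identity $\mathcal{F}_2\circ\mathcal{G}_2(M)\simeq M$, combining with the graded surjection $\tilde{Y'}^\lambda\twoheadrightarrow S'^\lambda$ built into the Hemmer--Nakano construction (from the starting step $W_0=S_l^{\lambda'}$) and the definitions $W'(\lambda)=\mathcal{G}_2(S'^\lambda)$ and $L'(\lambda)=W'(\lambda)/\Rad W'(\lambda)$.

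The main obstacle is this final grading-compatibility step. The Morita-type identification $\mathcal{G}_2(\tilde{Y'}^\lambda)\simeq e_\lambda\Schur_{d,n}$ is formal once the decomposition of $V^{\otimes n}$ into graded Young modules is in hand, but verifying $k_\lambda=0$ requires tracking three independently specified gradings (on $\tilde{Y'}^\lambda$ from the Hemmer--Nakano construction, on $W'(\lambda)$ from $\mathcal{G}_2$, and on $L'(\lambda)$ as the head of $W'(\lambda)$) through the functors $\mathcal{F}_2$ and $\mathcal{G}_2$, and then using the adjunction to match them.
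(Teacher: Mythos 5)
Your first half is sound and is essentially a hands-on substitute for what the paper gets by citation: you identify $\mathcal G_2(\tilde{Y'}^\lambda)$ with $e_\lambda\Schur_{d,n}$ for a degree-zero primitive idempotent $e_\lambda$ via the double-centralizer description $\Schur_{d,n}=\bigoplus_k\Hom(V^{\otimes n},V^{\otimes n}[k])$ and $\Schur_{d,n}e\simeq V^{\otimes n}$, concluding $\mathcal G_2(\tilde{Y'}^\lambda)\simeq P'(\lambda)[k_\lambda]$; the paper simply invokes \cite[Corollary 3.8.2]{HN} for the same conclusion. The problem is the step you yourself flag as the ``main obstacle'': pinning down $k_\lambda=0$. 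Your proposed mechanism --- apply $\mathcal F_2$ to the surjection of $e_\lambda\Schur_{d,n}$ onto its head $L'(\lambda)[k_\lambda]$ --- does not work in general, because the Schur functor kills roughly half of the simple modules: when $\mathcal F_2(L'(\lambda))=0$ (which happens whenever $\lambda$ is not in the appropriate regular/restricted class) the resulting map carries no information about $k_\lambda$. Even when $\mathcal F_2(L'(\lambda))\neq 0$ you would still need to compare its grading against that of the head of $\tilde{Y'}^\lambda$, which requires further argument.

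The paper's route avoids this entirely and supplies the one non-formal input you are missing: starting from the degree-preserving epimorphism $\tilde{Y'}^\lambda\twoheadrightarrow S'^\lambda$ (coming from the monomorphism $S_l^{\lambda'}\hookrightarrow Y_l^{\lambda'}$ in the Hemmer--Nakano construction), it invokes \cite[Theorem 3.3.4(ii)]{HN} to conclude that $\mathcal G_2$ carries this epimorphism to an epimorphism $\mathcal G_2(\tilde{Y'}^\lambda)\twoheadrightarrow\mathcal G_2({S'}^\lambda)=W'(\lambda)$. This is genuinely not formal: $\mathcal G_2$ is a Hom functor, hence only left exact, and surjectivity of the induced map depends on a vanishing result for the kernel (which has a Specht filtration) --- exactly the content of the cited theorem. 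Once one has an epimorphism $P'(\lambda)[k_\lambda]\twoheadrightarrow W'(\lambda)$, comparing simple heads ($L'(\lambda)[k_\lambda]$ against $L'(\lambda)=W'(\lambda)/\Rad W'(\lambda)$ in degree zero) forces $k_\lambda=0$. If you replace your $\mathcal F_2$-based step with this argument --- i.e., justify that the epimorphism $\tilde{Y'}^\lambda\twoheadrightarrow S'^\lambda$ survives $\mathcal G_2$ --- your proof closes.
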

\begin{proof}
As we have a monomorphism $S^{\rm left}(\lambda^t)\rightarrow Y^{\rm left}_s(\lambda^t)$, we have 
the epimorphism
$$
Y'(\lambda)^\Psi=Y_s(\lambda^t)\rightarrow \tilde S(\lambda^t)=S'(\lambda)^\Psi.
$$
Thus, by \cite[Theorem 3.3.4(ii)]{HN}, we have the epimorphism 
$$
\mathcal G'(Y'(\lambda))\rightarrow\mathcal G'(S'(\lambda))=W'(\lambda).
$$
We have $\mathcal G'(Y'(\lambda))\simeq P'(\lambda)[k]$, for some 
$k\in\Z$, by \cite[Corollary 3.8.2]{HN}. Thus, the existence of the 
epimorphism $P'(\lambda)[k]\rightarrow W'(\lambda)$ implies $k=0$. 
\end{proof}

The following is our main object of study. 

\begin{defn}
The \emph{graded decomposition number} $d_{\lambda\mu}(v)$, for 
$\lambda\vdash n$ and $\mu\vdash n$, is the Laurent polynomial defined by
$$
d_{\lambda\mu}(v)=\sum_{k\in\Z} (W(\lambda): L(\mu)[k])v^k,
$$
where $(W(\lambda): L(\mu)[k])$ is the multiplicity of $L(\mu)[k]$ in 
the composition factors of $W(\lambda)$. 
\end{defn}

We also define $d_{\lambda\mu}'(v)=\sum_{k\in\Z} (W'(\lambda): L'(\mu)[k])v^k$. 
If $\mu$ is $e$-restricted, define $D'(\mu)=S'(\mu)/\Rad S'(\mu)$. Then we have
$$
d_{\lambda\mu}'(v)=\sum_{k\in\Z} (S'(\lambda): D'(\mu)[k])v^k.
$$

\section{Graded decomposition numbers}

Here, we recall the Leclerc-Thibon basis of the Fock space. 
Let $\Lambda$ be the ring of symmetric functions with coefficients in 
$\Q(v)$, and let $s_\lambda$ be the Schur polynomial associated with a partition $\lambda$. 
Each node $x$ of $\lambda$ has the residue $\res(x)$: if it is on the $a^{th}$ 
row and the $b^{th}$ column of $\lambda$, then $\res(x)=-a+b\in\Z/e\Z$. 
The quantized enveloping algebra $U_v$ of type $A^{(1)}_{e-1}$, which is generated by 
the Chevalley generators $e_i$'s $f_i$'s and the Cartan torus part, acts 
on $\Lambda$ by 
$$
e_is_\lambda=\sum_{\res(\lambda/\mu)=i}v^{-N_i^a(\lambda/\mu)}s_\mu,\;\;
f_is_\lambda=\sum_{\res(\mu/\lambda)=i}v^{N_i^b(\mu/\lambda)}s_\mu,
$$
for $i\in\Z/e\Z$, etc. where $N_i^a(x)$ (resp. $N_i^b(x)$) is the number of 
addable $i$-nodes minus the number of removable $i$-nodes above (resp. below) $x$. 
This is called the (level $1$) \emph{deformed Fock space}. To identify 
our Fock space with those used in \cite{LT} and \cite{VV}, transpose partitions. 

In \cite{LT}, the authors 
introduced the bar-involution on the deformed Fock space, and defined two kinds of the 
canonical bases on $\Lambda$. One of the basis, which consists of the elements 
$b^+_\mu$, for partitions $\mu$, is characterized by the bar-invariance and 
the triangularity with requirement about polynomiality: 
$$
\overline{b^+_\mu}=b^+_\mu\;\;\text{and}\;\;
b^+_\mu\in s_\mu+\sum_{\lambda\triangleright\mu}v\Z[v]s_\lambda.
$$

Let $\mu=(\mu_1,\dots,\mu_r)$ be a partition, and consider the infinite sequence 
$$
(i_1,\dots,i_r,i_{r+1},\dots)=(\mu_1,\mu_2-1,\dots,\mu_r-r+1,-r,-r-1,\dots).
$$
In the Fermionic description of the deformed Fock space, $s_\mu$ is the infinite wedge
$u_{i_1}\wedge u_{i_2}\wedge\cdots$. Let $A_r$ be the number of pairs $(i,j)$ with 
$1\leq i<j\leq r$ and $j-i\not\in e\Z$. Then, the bar-involution is defined by 
$\overline v=v^{-1}$ and 
$$
\overline{u_{i_1}\wedge\cdots\wedge u_{i_r}\wedge u_{i_{r+1}}\wedge\cdots}=
(-1)^{r(r-1)/2}v^{A_r}u_{i_r}\wedge\cdots\wedge u_{i_1}\wedge u_{i_{r+1}}\wedge\cdots.
$$

We do not explain the straightening law, which is explained in \cite{LT}, but 
it is clear that if $\mu=(n)$ then $b^+_\mu=s_\mu$.

By \cite[Theorem 3.2]{LT}, $\{b^+_\mu \mid \text{$\mu$ is $e$-restricted}.\}$ 
is the canonical basis i.e. the lower global basis of the 
$U_v$-submodule generated by the empty partition, which is 
isomorphic to the basic representation $V(\Lambda_0)$. 

We write $b^+_\mu=\sum_{\lambda\trianglerighteq\mu}e^+_{\lambda\mu}(v)s_\lambda$. 
Define $d_{\lambda\mu}=[For(W(\lambda)): For(L(\mu))]$. 

The following was conjectured by Leclerc and Thibon in [{\it loc. cit}] and 
proved by Varagnolo and Vasserot \cite[Theorem 11]{VV}. 

\begin{thm}
\label{VV}
If the characteristic of $F$ is zero, then 
$d_{\lambda\mu}=e^+_{\lambda\mu}(1)$, for $\lambda\vdash n$ and $\mu\vdash n$. 
\end{thm}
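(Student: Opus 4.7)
The plan is to reduce the decomposition number identity to a Kazhdan-Lusztig-type statement on the affine side. Two natural routes are available: a geometric one via Nakajima quiver varieties (the original Varagnolo-Vasserot approach), and an algebraic one through Kazhdan-Lusztig's equivalence of categories between quantum group modules at a root of unity and parabolic category $\mathcal O$ for an affine Lie algebra at negative level. I will outline the algebraic route, as it makes the role of the canonical basis most transparent.

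First, I would invoke the Kazhdan-Lusztig tensor structure theorem to obtain a category equivalence between a block of $\Schur_{d,n}\cmod$ (in characteristic zero, at an $e$-th root of unity) and a block of parabolic category $\mathcal O$ for $\widehat{\mathfrak{sl}}_d$ at negative level $-e-d$, arranged so that the Weyl module $W(\lambda)$ corresponds to a parabolic Verma module $M(\lambda)$ and the simple $L(\mu)$ corresponds to its irreducible head $L^{\mathrm{aff}}(\mu)$. This would already yield the identity $d_{\lambda\mu}=[M(\lambda):L^{\mathrm{aff}}(\mu)]$. Second, apply the Kashiwara-Tanisaki theorem, the affine Kazhdan-Lusztig conjecture proved in characteristic zero, which expresses $[M(\lambda):L^{\mathrm{aff}}(\mu)]=P_{\lambda\mu}(1)$ in terms of a parabolic inverse Kazhdan-Lusztig polynomial for the affine Weyl group of type $\widehat{A}_{d-1}$. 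Third, I would establish $e^+_{\lambda\mu}(v)=P_{\lambda\mu}(v)$ by realising the deformed Fock space geometrically, either via perverse sheaves on semi-infinite flag varieties or via $K$-theory of quiver varieties, so that the Leclerc-Thibon basis element $b^+_\mu$ corresponds to the class of an IC sheaf and $e^+_{\lambda\mu}(v)$ records stalk dimensions, with bar-invariance and upper-triangularity implementing Lusztig's purity characterization of canonical bases. Setting $v=1$ then completes the proof.

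The hardest step is the first one: producing the equivalence with the correct dictionary of highest weights and the correct choice of level. This invokes Kazhdan-Lusztig's substantial machinery of fusion products, Tate extensions and semi-infinite cohomology, and verifying that Weyl modules on the Schur side really map to parabolic Verma modules on the affine side, rather than to some dual or shifted object, requires delicate bookkeeping at the level of formal characters. Characteristic zero is essential throughout: it underlies both the Kashiwara-Tanisaki theorem and the purity/decomposition-theorem argument identifying IC stalks with canonical basis coefficients, and both ingredients genuinely fail in positive characteristic, consistent with the fact that the Leclerc-Thibon conjecture itself fails there.
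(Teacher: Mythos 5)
The statement you are proving is presented in the paper as a known result whose proof is the citation to Varagnolo--Vasserot \cite[Theorem 11]{VV}, and your outline is essentially a roadmap of that proof in its Kazhdan--Lusztig/Kashiwara--Tanisaki incarnation; so you are on the route the paper points to, but be aware that your third step is not the routine verification your phrasing suggests. The identification of the Leclerc--Thibon coefficients $e^+_{\lambda\mu}(v)$ with (inverse parabolic) Kazhdan--Lusztig polynomials, via a geometric realisation of the deformed Fock space, is the main theorem of \cite{VV} and carries most of the technical weight of the whole argument; as written, your proposal quietly assumes the hardest ingredient while flagging the category equivalence as the bottleneck. The paper also records, and for its graded main theorem actually uses, a genuinely different and more economical route due to Leclerc \cite{L}: take as input Ariki's theorem \cite{A}, which already gives $d_{\lambda\mu}=e^+_{\lambda\mu}(1)$ when $\mu$ is $e$-restricted (this is the Hecke algebra case, seen through the Schur functor), and reduce arbitrary $\mu$ to the $e$-restricted case. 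The reduction combines Ringel duality (in the paper's graded notation, $d'_{\lambda\mu}(v)=\sum_{k}(T(\mu'):W(\lambda')[k])v^{-k}$), Andersen's results on tilting modules for quantum groups at roots of unity \cite{An,AP} to realise $T(\hat\mu)$ as an injective envelope and pass from $\mu,\lambda$ to $\hat\mu,\tilde\lambda$ with $\hat\mu'$ $e$-restricted, and the combinatorial symmetry $e^+_{\lambda\mu}(v)=v^{\frac{d(d-1)}{2}-\ell_\mu}e^+_{\tilde\lambda'\hat\mu'}(v^{-1})$ of the canonical basis from \cite[Theorem 2]{L}. That route avoids affine category $\mathcal O$, semi-infinite cohomology and the decomposition theorem entirely, at the price of invoking Ariki's theorem; yours avoids Ariki's theorem at the price of the full Kazhdan--Lusztig and Varagnolo--Vasserot machinery. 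Either is legitimate, but if your aim were a self-contained proof you would still owe the contents of \cite{VV} for your step three.
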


In fact, as is pointed out by Leclerc in \cite{L}, who proved the 
$q$-Schur algebra analogue of the result \cite[Theorem 2.4]{E}, 
we may prove the above theorem by using the decomposition numbers 
of the Hecke algebra. 
As we already have the graded decomposition numbers of the Hecke algebra 
in \cite[Corollary 5.15]{BK2}, we may prove the graded analogue 
of Theorem \ref{VV} by the argument in the proof of \cite[Theorem 1]{L}. 
Our purpose is to show this by defining suitable graded lifts. 

Recall the direct sum of signed permutation modules 
$$
N=\bigoplus_{\mu\models n, \ell(\mu)\leq d} N(\mu). 
$$
We define $T=\mathcal G(N)$. Then we have 
\begin{equation*}
\begin{split}
\Schur_{d,n}'&=\bigoplus_{k\in\Z}\Hom_{\Hecke_n'}(N^\Psi, N^\Psi[k])
=\bigoplus_{k\in\Z}\Hom_{\Hecke_n}(N, N[k])\\
&\simeq
\bigoplus_{k\in\Z}\Hom_{\Schur_{d,n}}(T, T[k]). 
\end{split}
\end{equation*}
This is the Ringel dual description of the $q$-Schur algebra if we forget the grading. 
Thus, we have the functor\footnote{Let $V\in\czmodr\Schur_{d,n}$, 
$\varphi\in\Hom(T, V[k])$ and $f\in\Hom(T,T[l])$. Then the composition 
$T\rightarrow T[l]\rightarrow V[k+l]$ is denoted by $\varphi f$.}
$$
F=\bigoplus_{k\in\Z}\Hom_{\Schur_{d,n}}(T[-k],-): \czmodr\Schur_{d,n} \rightarrow 
\czmodr\Schur_{d,n}',
$$
which induces equivalence between the full subcategory 
of Schur filtered $\Schur_{d,n}$-modules and the full subcategory 
of Weyl filtered $\Schur_{d,n}'$-modules when we ignore grading, by \cite[Theorem 6]{Ri}. 

\begin{lem}
\label{Ringel duality}
We have the following.\footnote{Compare (a) with \cite[Proposition 4.1.5]{D}.}
\begin{itemize}
\item[(a)]
$F(H^0(\lambda^t)[s])=W'(\lambda)[s]$, for $\lambda\vdash n$ and $s\in\Z$. 
\item[(b)]
$F(T(\mu^t))=P'(\mu)$, for $\mu\vdash n$. 
\end{itemize}
\end{lem}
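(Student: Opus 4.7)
The plan is to reduce all three parts to the standard Ringel duality picture underlying \cite[Theorem 6]{Ri}, while carefully tracking graded shifts. The starting observation is that $F$ commutes with shifts: for $V\in\Schur_{d,n}\czmod$, directly from the definition,
$$
F(V[k])_l=\Hom(T[-l],V[k])=\Hom(T[-l-k],V)=F(V)[k]_l,
$$
so any non-graded Ringel-duality identification lifts uniquely up to a shift, and it is only that shift which needs to be pinned down.

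I would prove part (b) first, by direct computation. Since $\mathcal F_1\circ\mathcal G_1\simeq\Id$, the functor $\mathcal G_1$ is fully faithful, and $T=\mathcal G_1(N)$. Hence for every $V\in\Hecke_n\czmod$,
$$
F(\mathcal G_1(V))=\bigoplus_{k\in\Z}\Hom_{\Schur_{d,n}}(\mathcal G_1(N)[-k],\mathcal G_1(V))\simeq\bigoplus_{k\in\Z}\Hom_{\Hecke_n}(N,V[k]).
$$
Applying this to $V=Y^{\lambda'}$ and transporting across the equivalence $\Hecke_n\czmod\simeq\Hecke_n'\czmod$ induced by $\Psi\colon\Hecke_n\simeq\Hecke_n'$, under which $N$ corresponds to $M=\oplus_\mu M^\mu$ and $Y^{\lambda'}$ to $\tilde{Y'}^\lambda=(Y^{\lambda'})^\Psi$, yields
$$
F(T(\lambda'))\simeq\bigoplus_{k\in\Z}\Hom_{\Hecke_n'}(M,\tilde{Y'}^\lambda[k])=\mathcal G_2(\tilde{Y'}^\lambda)=P'(\lambda),
$$
where the last equality is Lemma \ref{projective cover}. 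The normalisation $N^{(1^n)}\simeq\Hecke_n$ in $\Hecke_n\czmod$ fixed earlier is exactly what forbids an extra shift from appearing.

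Part (a) then follows by a shift-forcing argument. In the non-graded setting, Ringel duality gives $F(H^0(\lambda'))\simeq W'(\lambda)$ (compare \cite[Proposition 4.1.5]{D}), so $F(H^0(\lambda'))\simeq W'(\lambda)[j]$ in the graded setting for a unique $j\in\Z$. Applying $F$ to the canonical epimorphism $T(\lambda')\twoheadrightarrow H^0(\lambda')$ from the tilting characterisation produces, via (b), an epimorphism $P'(\lambda)\twoheadrightarrow W'(\lambda)[j]$. Since $P'(\lambda)$ is the projective cover of $L'(\lambda)$ with top concentrated in degree $0$, and $W'(\lambda)/\Rad W'(\lambda)=L'(\lambda)$ is likewise concentrated in degree $0$ by definition, such an epimorphism must be degree preserving, forcing $j=0$.

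Part (c) is then immediate: by \cite[Theorem 6]{Ri} the functor $F$ restricts to an equivalence between the full subcategories of Schur-filtered and Weyl-filtered graded $\Schur_{d,n}$-modules, and both $T(\mu')$ and $H^0(\lambda')[k]$ are Schur-filtered, so
$$
\Hom(T(\mu'),H^0(\lambda')[k])\xrightarrow{\;\sim\;}\Hom(F(T(\mu')),F(H^0(\lambda')[k]))=\Hom(P'(\mu),W'(\lambda)[k])
$$
by (a) and (b). The main obstacle will be the shift bookkeeping in (b): the interaction of $\Psi$ with the two graded bimodule structures on $\Schur_{d,n}e$ must be handled precisely, so that the identification of $\oplus_k\Hom_{\Hecke_n}(N,V[k])$ with $\mathcal G_2(V^\Psi)$ happens on the nose, with no spurious shift. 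Once (b) is in place, (a) and (c) follow formally.
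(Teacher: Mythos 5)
Your argument is correct, and for parts (b) and (c) it is essentially the paper's own proof: the paper also computes the image of $F(T(\lambda'))$ under the Schur functor of the second kind (via $F(T(\lambda'))e=\Hom(Te,T(\lambda')e)\simeq\Hom(N^{(1^n)},Y^{\lambda'})\simeq\tilde{Y'}^\lambda$, which is the same computation as your $F(\mathcal G_1(V))\simeq\oplus_k\Hom(N,V[k])$ transported through $\Psi$), and then invokes Lemma \ref{projective cover}; and (c) is likewise dispatched by the Ringel equivalence on filtered subcategories. Where you genuinely diverge is part (a). The paper proves (a) independently of (b), by the same direct mechanism: it notes that the degree conventions give $H^0(\lambda')[k]e=\tilde S^{\lambda'}[k]$ on the nose, computes $\mathcal F_2(F(H^0(\lambda')[k]))\simeq {S'}^\lambda[k]$, and then recovers $F(H^0(\lambda')[k])\simeq W'(\lambda)[k]$ from $\mathcal G_2\mathcal F_2\simeq\Id$ on Weyl-filtered modules. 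You instead take the non-graded identification $For(F(H^0(\lambda')))\simeq For(W'(\lambda))$ as known, apply $F$ to the defining epimorphism $T(\lambda')\twoheadrightarrow H^0(\lambda')$ (legitimate, since the kernel is Schur-filtered and $\Ext^1(T,-)$ vanishes on such modules), and pin down the shift by comparing heads of $P'(\lambda)$ and $W'(\lambda)[j]$. Both routes are valid; the paper's is more self-contained and makes the degree normalisation $eW_l(\lambda')={S_l}^{\lambda'}$ do the work, while yours trades that explicit bookkeeping for a soft uniqueness-of-shift argument once (b) is secured. One small imprecision: the head of $P'(\lambda)$ is the graded simple $L'(\lambda)$, which need not be concentrated in a single degree; what your argument actually uses is that an epimorphism $P'(\lambda)\to W'(\lambda)[j]$ forces $L'(\lambda)\simeq L'(\lambda)[j]$ and hence $j=0$, which is fine but should be phrased that way.
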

\begin{proof}
(a) Note that, due to our degree convention, 
$S^{\rm left}(\lambda^t)[k]=eW^{\rm left}(\lambda^t)[k]$ implies that 
$$
H^0(\lambda^t)[k]e=(W^{\rm left}(\lambda^t)[k])^\circ e=S^{\rm left}(\lambda^t)^\circ[k]
={\tilde S}(\lambda^t)[k]. 
$$
Thus, 
\begin{equation*}
\begin{split}
F(H^0(\lambda^t)[s])e&=\bigoplus_{k\in\Z}\Hom_{\Schur_{d,n}}(T, H^0(\lambda^t)[k+s])e\\
&\simeq
\bigoplus_{k\in\Z}\Hom_{\Schur_{d,n}}(T((1^n)), H^0(\lambda^t)[k+s])\\
&\simeq
\bigoplus_{k\in\Z}\Hom_{\Schur_{d,n}}(W^{\rm left}(\lambda^t)[k+s], T((1^n))^\circ)\\
&\simeq
\bigoplus_{k\in\Z}\Hom_{\Hecke_n}(S^{\rm left}(\lambda^t)[k+s], N((1^n))^\circ)\\
&\simeq
\bigoplus_{k\in\Z}\Hom_{\Hecke_n}(N((1^n)), {\tilde S}(\lambda^t)[k+s])\\
&\simeq
\bigoplus_{k\in\Z}\Hom_{\Hecke_n'}(N((1^n))^\Psi, {\tilde S}(\lambda^t)^\Psi[k+s])\\
&\simeq {\tilde S}(\lambda^t)^\Psi[s].
\end{split}
\end{equation*}
We have proved $\mathcal F'(F(H^0(\lambda^t)[s]))\simeq S'(\lambda)[s]$. 
Then, since $F(H^0(\lambda^t))$ 
has Weyl filtration, we have 
$$
F(H^0(\lambda^t)[s])\simeq 
\mathcal G'\mathcal F'(F(H^0(\lambda^t)[s]))\simeq W'(\lambda)[s].
$$

(b) By the similar computation as (a), we have
$$
\mathcal F'(F(T(\mu^t)))=Y_s(\mu^t)^\Psi=Y'(\mu).
$$
The result follows by Lemma \ref{projective cover}.
\end{proof}

\begin{cor}
\label{key corollary}
We have the following equalities.
\begin{equation*}
\begin{split}
&(a)\quad d'_{\lambda\mu}(v)
=\sum_{k\in\Z}(T(\mu^t): W(\lambda^t)[k])v^{-k}.\\
&(b)\quad \Hom(W(\lambda), H^0(\mu)[k])
=\begin{cases} F \quad&(\text{if $\lambda=\mu$ and $k=0$}),\\ 
               0 &(\text{otherwise}).\end{cases}\\
&(c)\quad d_{\lambda\mu}'(v)=\sum_{k\in\Z} (W'^{\rm left}(\lambda): L'^{\rm left}(\mu)[k])v^{-k}.
\end{split}
\end{equation*}
\end{cor}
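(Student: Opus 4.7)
Part (b) is the standard fact from graded highest weight category theory. In the non-graded setting, $\Hom(For(W(\lambda)), For(H^0(\mu)))$ is one-dimensional if $\lambda = \mu$, and is zero otherwise, the nonzero map being the canonical composition $W(\lambda) \twoheadrightarrow L(\lambda) \hookrightarrow H^0(\lambda)$. To upgrade this to the graded statement, I would observe that $L(\lambda)$ sits as the graded head of $W(\lambda)$ (by the definition $L(\lambda) = W(\lambda)/\Rad W(\lambda)$) and as the graded socle of $H^0(\lambda) = (W_l(\lambda))^\circ$, both concentrated in degree zero, so the canonical map is homogeneous of degree zero. Any purported nonzero graded map $W(\lambda) \to H^0(\mu)[k]$ then collapses to this one by the non-graded statement, forcing $\lambda = \mu$ and $k = 0$.

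For the first equality in part (a), I would use graded BGG reciprocity. Beginning from the definition $d'_{\lambda\mu}(v) = \sum_k (W'(\lambda) : L'(\mu)[k]) v^k$, the fact that $P'(\mu)$ is the graded projective cover of $L'(\mu)$ with head in degree zero yields
$$(W'(\lambda) : L'(\mu)[k]) = \dim \Hom(P'(\mu), W'(\lambda)[-k]).$$
Lemma~\ref{Ringel duality}(c) rewrites the right-hand side as $\dim \Hom(T(\mu'), H^0(\lambda')[-k])$. Propagating a Weyl filtration of $T(\mu')$ through the vanishing $\Ext^1(W(\nu)[j], H^0(\lambda')[l]) = 0$ (itself a consequence of part (b) and the long exact sequences) turns this into $(T(\mu') : W(\lambda')[-k])$. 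Reindexing $j = -k$ gives $d'_{\lambda\mu}(v) = \sum_j (T(\mu') : W(\lambda')[j]) v^{-j}$, the first equality.

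For the second equality I would repeat the argument symmetrically on the left-module side. Lemma~\ref{dual decomposition numbers} reformulates $d'_{\lambda\mu}(v)$ in terms of composition multiplicities of left Specht modules $S_\lambda = {S'_l}^{\lambda'}$, and left-module versions of Lemmas~\ref{graded isom} and~\ref{Ringel duality} deliver $\sum_k (T_l(\mu') : W_l(\lambda')[k]) v^k$. The sign flip from $v^{-k}$ to $v^k$ is traced to the left/right switch: equivalently one may invoke the graded self-duality $T(\mu') \cong (T_l(\mu'))^\circ$ of tilting modules, since dualization by $\circ$ sends a left Weyl filtration factor $W_l(\lambda')[k]$ to $H^0(\lambda')[k]$, interchanging Weyl and costandard filtrations while carrying grading shifts unchanged; combining this identification with the symmetry between the two kinds of filtrations on a tilting module converts the sign.

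\textbf{Anticipated obstacle.} The principal difficulty lies in consistent bookkeeping of graded shifts across the several dualities in play: the natural dual $\circ$, the anti-involutions $\sharp$ and $*$, and the isomorphism $\Psi : \Hecke_n \to \Hecke_n'$, together with the left/right switch. Verifying at the graded level the tilting self-duality $T(\mu') \cong (T_l(\mu'))^\circ$ under the chosen normalization $N^{(1^n)} \cong \Hecke_n$ of shifts fixed earlier in the paper, and setting up the left-module analogue of the Ringel duality lemma with matching grading conventions, is where the real care is required; the rest is formal.
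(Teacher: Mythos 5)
Your route for part (a) --- graded reciprocity $(W'(\lambda):L'(\mu)[k]) = \dim\Hom(P'(\mu), W'(\lambda)[-k])$, conversion via Lemma \ref{Ringel duality}(c), then propagation through a Weyl filtration of $T(\mu')$ using graded $\Hom$/$\Ext$ orthogonality --- is exactly the paper's. But there is a genuine gap in your treatment of (b), and since your (a) takes (b) as input, the gap propagates. You assert that $L(\mu)$ sits as the graded socle of $H^0(\mu) = (W_l(\mu))^\circ$ with the same degree-zero normalization as the graded head of $W(\mu)$. That is not a standard fact here: $L(\mu)$ is by definition the head of $W(\mu) = \mathcal G_1(S^\mu)$, whereas the socle of $H^0(\mu)$ is $(L_l(\mu))^\circ$, the natural dual of the head of the independently constructed left module $W_l(\mu)$. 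By the Gordon--Green theorem these are two graded lifts of the same ungraded simple module and hence agree only up to an undetermined shift $[k_\mu]$; showing $k_\mu = 0$ is precisely the content of (b). Your argument therefore begs the question at exactly the point where work is needed.

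The paper resolves this by not assuming (b): it defines $k_\lambda$ by $\Hom(W(\lambda), H^0(\lambda)[k_\lambda]) \neq 0$, carries the unknown shift through to obtain $d'_{\lambda\mu}(v) = \sum_k (T(\mu'):W(\lambda')[k])\, v^{-k_\lambda - k}$, and then kills it: since $Y^{\mu'} = (Y_l^{\mu'})^\circ$ gives $T(\mu') = T_l(\mu')^\circ$, and the construction of $Y_l^{\mu'}$ begins with $W_0 = S_l^{\mu'}$, the unshifted monomorphism $W_l(\mu') \to T_l(\mu')$ dualizes to an unshifted epimorphism $T(\mu') \to H^0(\mu')$, forcing $(T(\mu'):W(\mu')[k]) = \delta_{k0}$; setting $\lambda = \mu$ and using $d'_{\mu\mu}(v)=1$ then yields $k_\mu = 0$, whence (a) and (b) follow together. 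Note that this is exactly the graded tilting self-duality you flagged as the ``anticipated obstacle'' for the second equality of (a) --- the repair is to deploy it to \emph{prove} (b) rather than to assume (b). Your reading of the second equality of (a) is otherwise consistent with the paper's: the sign flip comes from the left-module grading convention $A_k M_l \subseteq M_{l-k}$, which is why the paper remarks that $v$ is replaced by $v^{-1}$ when passing to left modules.
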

\begin{proof}
Lemma \ref{Ringel duality} implies that $(W'(\lambda): L'(\mu)[k])$ is equal to
$$
\dim \Hom_{\Schur_{d,n}'}(P'(\mu)[k], W'(\lambda))=\dim \Hom_{\Schur_{d,n}}(T(\mu^t)[k], H^0(\lambda^t)).
$$
Thus, if we define $k_\lambda\in\Z$ by 
$\Hom_{\Schur_{d,n}}(W(\lambda^t), H^0(\lambda^t)[k_\lambda])\neq0$, then 
we have
$$
(W'(\lambda): L'(\mu)[k])=(T(\mu^t):W(\lambda^t)[-k_\lambda-k]).
$$
It follows that 
$$
d'_{\lambda\mu}(v)
=\sum_{k\in\Z}(T(\mu^t): W(\lambda^t)[k])v^{-k_\lambda-k}.
$$
Note that $T(\mu^t)=T^{\rm left}(\mu^t)^\circ$. In fact, 
$T^{\rm left}(\mu^t)^\circ$ is both Weyl-filtered and Schur-filtered, and it is 
isomorphic to $T(\mu^t)$ up to some shift. To see that there is no shift, 
we apply the Schur functor and use the definition 
$Y_s(\mu^t)=Y_s^{\rm left}(\mu^t)^\circ$. Now, 
the definition of $Y_s^{\rm left}(\mu^t)$ says that there is 
a monomorphism $W^{\rm left}(\mu^t)\rightarrow T^{\rm left}(\mu^t)$, so that we have an epimorphism 
$T(\mu^t)\rightarrow H^0(\mu^t)$. This implies that 
$$
(T(\mu^t): W(\mu^t)[k])=\delta_{k0},\;\;\text{for all $\mu$}. 
$$
Hence, we set $\lambda=\mu$ and deduce $k_\lambda=0$. Thus, 
(a) and (b) follow. In order to prove (c), observe that 
\begin{equation*}
\begin{split}
d'_{\lambda\mu}(v)
&=\sum_{k\in\Z}\dim \Hom_{\Schur_{d,n}'}(P'(\mu), W'(\lambda)[-k])v^k\\
&=\sum_{k\in\Z}\dim \Hom_{\Hecke_n'}(Y'(\mu), S'(\lambda)[-k])v^k\\
&=\sum_{k\in\Z}\dim \Hom_{\Hecke_n'}(Y'(\mu)^{-\sharp}, S'(\lambda)^{-\sharp}[k])v^k.
\end{split}
\end{equation*}
As ${S'}^{\rm left}(\lambda)=S'(\lambda)^{-\sharp}$, we may deduce that the formula in (c) holds. 
\end{proof}

In particular, we have
$$
W(\lambda)\twoheadrightarrow L(\lambda)\hookrightarrow H^0(\lambda)
$$
in $\czmodr\Schur_{d,n}$. 

%\begin{rem}
%Define a $v$-analogue of the Kostka numbers by
%$$
%K_{\lambda\mu}(v)=\sum_{k\in\Z} (N(\mu): S^{\lambda'}[k])v^{-k}. 
%$$
%Define another polynomial $n_{\nu\mu}(v)$ which is obtained by 
%decomposing the signed permutation modules into indecomposable direct summands:
%$$
%n_{\nu\mu}(v)=\sum_{k\in\Z}(N(\mu): Y^{\nu'}[k])v^{-k}. 
%$$
%Then we have $K_{\lambda\mu}(v)=\sum_{\nu\vdash n}d_{\lambda\nu}(v)n_{\nu\mu}(v)$. 
%
%It may be interesting to compare $K_{\lambda\mu}(v)$ with 
%the Kostka polynomials defined by the Lascoux-Sch\"utzenberger 
%charge statistics. 
%\end{rem}

In the rest of the paper, we will prove that the formula in 
\cite{BK2} which equates 
$d_{\lambda\mu}(v)$ and the parabolic Kazhdan-Lusztig polynomials 
$e^+_{\lambda\mu}(v^{-1})$, for $e$-restricted $\mu$, holds for 
all $\mu$. 

\begin{defn}
Let $\lambda\vdash n$ and $\mu\vdash n$. Write 
$\mu=\mu^{(0)}+e\mu^{(1)}$ such that 
$\mu^{(0)}=(\mu^{(0)}_1,\dots,\mu^{(0)}_d)$ is $e$-restricted. Then, let 
$$
m=n+d(d-1)(e-1)
$$
and define 
$\hat\mu\vdash m$ and $\tilde\lambda, \tilde\mu\vdash m$ by
\begin{equation*}
\left\{
\begin{split}
\hat\mu&=2(e-1)\rho_d+(\mu^{(0)}_d,\dots,\mu^{(0)}_1)+e\mu^{(1)},\\
\tilde\lambda&=\lambda+(e-1)(d-1,\dots,d-1),\\
\tilde\mu&=\mu+(e-1)(d-1,\dots,d-1),
\end{split}\right.
\end{equation*}
where $\rho_d=(d-1,d-2,\dots,0)$. 
\end{defn}

\begin{prop}
We have the following. 
\begin{itemize}
\item[(1)]
For each $\mu\vdash n$, there is a unique $s\in\Z$ such that 
$$
\Hom_{\Schur_{d,m}}(L(\tilde\mu), T(\hat\mu)[s])=F,\;\;
\Hom_{\Schur_{d,m}}(L(\lambda), T(\hat\mu)[s])=0,\;\text{if $\lambda\neq\tilde\mu$.}
$$
\item[(2)]
Denote the value $s\in\Z$ in (1) by $\shift(\mu)$. Then we have
$$
d_{\lambda\mu}(v)=v^{-\shift(\mu)}d'_{\tilde\lambda^t\hat\mu^t}(v^{-1}).
$$
\end{itemize}
\end{prop}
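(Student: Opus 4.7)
The plan is to adapt the proof of \cite[Theorem 1]{L} to the graded setting, with the integer $\shift(\mu)$ recording the grading shift implicit in the tilting module $T(\hat\mu)$.

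For part (1), the starting point from Leclerc's non-graded argument is that the socle of $For(T(\hat\mu))$ is simple and isomorphic to $For(L(\tilde\mu))$. Since the socle of any finite-dimensional graded $\Schur_{d,n}$-module is automatically a graded submodule (being the annihilator of the graded Jacobson radical), the graded socle of $T(\hat\mu)$ is a graded lift of this simple submodule, and hence must be isomorphic to $L(\tilde\mu)[-s]$ for a uniquely determined $s\in\Z$. Setting $\shift(\mu):=s$, both assertions of (1) follow: the embedding $L(\tilde\mu)[-\shift(\mu)]\hookrightarrow T(\hat\mu)$ gives $\Hom(L(\tilde\mu),T(\hat\mu)[\shift(\mu)])=F$, while the absence of other simples in the socle forces $\Hom(L(\lambda),T(\hat\mu)[\shift(\mu)])=0$ for $\lambda\ne\tilde\mu$, and the uniqueness of $s$ reflects the fact that the graded socle is concentrated in a single internal degree.

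For part (2), Corollary \ref{key corollary}(a) reduces the claimed identity to the graded equality
\[
(W(\lambda):L(\mu)[k]) \;=\; (T(\hat\mu):W(\tilde\lambda)[k+\shift(\mu)])\qquad(k\in\Z),
\]
whose sum over $k$ recovers Leclerc's non-graded identity. My approach for the graded refinement combines three ingredients: (a) the graded Ringel duality of Lemma \ref{Ringel duality}, which converts the right-hand multiplicity into the Hom space $\dim\Hom(P'(\hat\mu'),W'(\tilde\lambda')[k+\shift(\mu)])$; (b) the socle identification of part (1), which pins down the unique grading shift so that Leclerc's isomorphism between graded Weyl multiplicities in $T(\hat\mu)$ and graded Jordan--H\"older multiplicities of $W(\lambda)$ respects the grading up to precisely $\shift(\mu)$; and (c) a graded BGG-type reciprocity derived from the Schur filtration of $T(\hat\mu)$ together with Corollary \ref{key corollary}(b), which gives $(T(\hat\mu):W(\tilde\lambda)[m])=\dim\Hom(T(\hat\mu),H^0(\tilde\lambda)[m])$. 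Matching coefficients in the graded-Hom generating function computed by (a) and (c) against the Weyl-multiplicity generating function obtained from (b) yields the identity.

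The hardest step will be verifying that the graded self-duality of $T(\hat\mu)$ is precisely pinned down by the socle normalisation of part (1), i.e., that $T(\hat\mu)\simeq T(\hat\mu)^\circ[2\shift(\mu)]$ (or an equivalent form of graded tilting self-duality). In the non-graded setting tilting modules are canonically self-dual; in the graded setting the shift is not automatic, and checking that it matches $\shift(\mu)$ is the compatibility that makes the non-graded Leclerc argument lift cleanly. Once this compatibility is established, tracking the single shift $\shift(\mu)$ through the non-graded proof gives the graded identity.
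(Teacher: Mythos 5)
Your part (1) is essentially the paper's argument: one first establishes the ungraded fact that $\Soc For(T(\hat\mu))$ is simple and isomorphic to $For(L(\tilde\mu))$ --- in the paper this is done via Andersen's theorem that $T(\hat\mu)$ is the injective envelope of $L(\mu)$ over $U_q(\mathfrak{sl}_d)$, the determinant twist relating $W(\tilde\mu)$ to $W(\mu)$, and the idempotent truncation $\Schur_{d',n}=\xi\Schur_{d,n}\xi$ passing between the $d\leq n$ and $d\geq n$ settings --- and then lifts the simple socle to the graded category to obtain a unique $s$. Taking the ungraded socle statement as given is reasonable for a sketch, but note that $\hat\mu$ and $\tilde\lambda,\tilde\mu$ are partitions of $N=n+d(d-1)(e-1)$, so the comparison takes place across two \emph{different} graded Schur algebras; your write-up never surfaces this, and it matters below.

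The gap is in part (2). The target identity $(W(\lambda):L(\mu)[k])=(T(\hat\mu):W(\tilde\lambda)[k+\shift(\mu)])$ is correct, and your ingredients (a) and (c) are exactly Lemma \ref{Ringel duality} and Corollary \ref{key corollary}, which process the right-hand side. But the left-hand side is a composition multiplicity in $\Schur_{d,n}\czmod$ while the right-hand side lives in $\Schur_{d,N}\czmod$, and the bridge between them is precisely what your ingredient (b) assumes rather than proves. The needed step is $\dim\Hom(W(\tilde\lambda),T(\hat\mu)[s+k])=(W(\lambda):L(\mu)[k])$, which rests on the injectivity of $T(\hat\mu)$ as a $U_q(\mathfrak{sl}_d)$-module (so every occurrence of $L(\tilde\mu)$ in a subquotient of $W(\tilde\lambda)$ extends to a homomorphism defined on all of $W(\tilde\lambda)$) together with the determinant twist identifying $[W(\tilde\lambda):L(\tilde\mu)]$ with $[W(\lambda):L(\mu)]$. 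Appealing to ``Leclerc's isomorphism \dots\ respects the grading up to precisely $\shift(\mu)$'' is circular: the graded compatibility of that isomorphism \emph{is} the assertion of part (2). The genuinely delicate point --- which the paper itself treats only briefly --- is that the extension argument happens over $U_q(\mathfrak{sl}_d)$, where no grading is available, so one must argue that the degree of the resulting homomorphism is forced; the paper closes this by normalising at $\lambda=\mu$ via Corollary \ref{key corollary}(b). Your proposed ``hardest step,'' a graded self-duality $T(\hat\mu)\simeq T(\hat\mu)^\circ[2\shift(\mu)]$, is not used in the paper and does not substitute for the injectivity argument; without that argument the proof does not close.
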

\begin{proof}
First we consider the non-graded case and follow 
the argument in the proof of \cite[Theorem 1]{L}. 
Main points in [{\it loc. cit.}] are that we can use \cite[Proposition 5.8]{An} and 
general properties of tilting modules to prove the identity, 
and that restrictive assumption on $e$ in \cite{An} was later 
removed in \cite{AP}, so that we have no restriction on $e$, here. 
Thus, if we ignore the grading then 
$T(\hat\mu)$ is the injective envelope of $L(\mu)$ in the category of finite dimensional 
$U_q(\mathfrak{sl}_d)$-modules. Let $det_q$ be the determinant 
representation of $U_q(\mathfrak{gl}_d)$. Then 
$$
W(\tilde\mu)=det_q^{\otimes (e-1)(d-1)}\otimes W(\mu). 
$$
As a $\Schur_{d,m}$-module, 
$\Soc T(\hat\mu)\simeq L(\nu)$, for some $\nu\vdash m$ such that  
$$
L(\nu)|_{U_q(\mathfrak{sl}_d)}\simeq L(\mu). 
$$
It follows that $\nu=\tilde \mu$ and
$$
\Hom_{U_q(\mathfrak{gl}_d)}(L(\tilde\mu), T(\hat\mu))=F,\;\;
\Hom_{U_q(\mathfrak{gl}_d)}(L(\lambda), T(\hat\mu))=0,\;\text{if $\lambda\neq\tilde\mu$.}
$$
Note that we are in the case $d\leq m$. Rename $d$ by $d'$ and 
take $d\geq m$. We denote by $\xi$ the projector to the direct sum of 
$M(\mu)$ with $\ell(\mu)\leq d'$. Then, we may identify 
two $F$-algebras 
$$
\Schur_{d',m}=\xi\Schur_{d,m}\xi. 
$$
Applying the Hom functor
$$
\Hom_{\Schur_{d,m}}(\Schur_{d,m}\xi,-): 
\cmodr\Schur_{d,m} \rightarrow \cmodr\Schur_{d',m}, 
$$
which sends the Weyl module to the Weyl module with the same label, and 
preserves irreducibility, we return to the case $d\geq m$ and 
obtain 
$$
\Hom_{\Schur_{d,m}}(L(\tilde\mu), T(\hat\mu))=F,\;\;
\Hom_{\Schur_{d,m}}(L(\lambda), T(\hat\mu))=0,\;\text{if $\lambda\neq\tilde\mu$,}
$$
in $\cmodr\Schur_{d,m}$. It implies that 
there is a unique $s\in\Z$ such that 
$$
\Hom_{\Schur_{d,m}}(L(\tilde\mu), T(\hat\mu)[s])=F
$$
in $\czmodr\Schur_{d,m}$, and that 
$$
\Hom_{\Schur_{d,m}}(L(\lambda), T(\hat\mu)[s])=0
$$
in $\czmodr\Schur_{d,m}$, if $\lambda\neq\tilde\mu$. 
We have proved (1). We also know $I(\tilde{\mu})=T(\hat{\mu})[s]$. 

As $F(H^0(\lambda))=W'(\lambda^t)$ and $F(T(\mu))=P'(\mu^t)$ by Lemma \ref{Ringel duality}, we have
\begin{equation*}
\begin{split}
(W(\lambda):L(\mu)[k])&=(I(\mu):H^0(\lambda)[-k])\\
                      &=(I(\tilde{\mu}):H^0(\tilde{\lambda})[-k])\\
                      &=(T(\hat{\mu})[\shift(\mu)]:H^0(\tilde{\lambda})[-k])\\
                      &=(P'(\hat{\mu}^t)[\shift(\mu)]:W'(\tilde{\lambda}^t)[-k])\\
                      &=({H'}^0(\tilde{\lambda}^t):L'(\hat{\mu}^t)[k+\shift(\mu)])\\
                      &=({W'}^{\rm left}(\tilde{\lambda}^t):{L'}^{\rm left}(\hat{\mu}^t)[k+\shift(\mu)]).
\end{split}
\end{equation*}
It follows from Corollary \ref{key corollary}(c) that 
$$
d_{\lambda\mu}(v)
=\sum_{k\in\Z}({W'}^{\rm left}(\tilde{\lambda}^t):{L'}^{\rm left}(\hat{\mu}^t)[k])v^{k-\shift(\mu)}
=v^{-\shift(\mu)}d'_{\tilde\lambda^t\hat\mu^t}(v^{-1}).
$$
We have proved (2). 
\end{proof}

Now, we use results from \cite{BK2}. Their deformed Fock space is dual 
to ours. The anti-involution which fixes the Cartan torus and 
interchages $e_i$ and $f_i$, for $i\in\Z/e\Z$, gives 
the left $U_v$-module structure on the dual space. 
Their basis which consists of $M_\mu$'s is the dual basis of 
our Schur polynomial basis, and their dual canonical basis in 
$V(\Lambda_0)^*=\Hom_{\Q(v)}(V(\Lambda_0),\Q(v))$, which is denoted 
$\{L_\lambda \mid \text{$\lambda$ is $e$-restricted.}\}$ in [{\it loc. cit}], 
is the dual basis of the canonical basis in $V(\Lambda_0)$. 
Hence, noting the definition of $[S^\mu:D^\lambda]_q$ \cite[p.7]{BK2} 
where notation for shifting is in the opposite direction, \cite[Corollary 5.15]{BK2} 
reads 
$$
d'_{\lambda\mu}(v)=e^+_{\lambda\mu}(v^{-1})\;\;
\text{if $\mu$ is $e$-restricted.}
$$
Hence, if the characteristic of $F$ is zero, then
$$
d_{\lambda\mu}(v)=v^{-\shift(\mu)}d'_{\tilde\lambda^t\hat\mu^t}(v^{-1})=
v^{-\shift(\mu)}e^+_{\tilde\lambda^t\hat\mu^t}(v).
$$

On the other hand, the following was proved in \cite[Theorem 2]{L}. 
We denote the affine symmetric group by $\tilde{S}_d$. It acts on 
$\Z^d$ by the level $e$ action. Let $\nu\in\Z^d$ be the unique weight 
in the $\tilde{S}_d$-orbit $\tilde{S}_d(\mu+\rho_d)$ that satisfies 
$\nu_1\geq\cdots\geq\nu_d$ and $\nu_1-\nu_d\leq e$. 
The stablizer of $\nu$ is a standard 
parabolic subgroup of finite order, and it has the longest element. 
We denote by $\ell_\mu$ the length of the longest element. 

\begin{thm}
The following formula holds. 
$$
e^+_{\lambda\mu}(v)=
v^{\frac{d(d-1)}{2}-\ell_\mu}e^+_{\tilde\lambda^t\hat\mu^t}(v^{-1}).
$$
\end{thm}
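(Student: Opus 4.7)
Plan: The formula is a symmetry of the Leclerc--Thibon canonical basis coefficients, and the natural route is to identify the $e^+_{\lambda\mu}(v)$ with parabolic Kazhdan--Lusztig polynomials and then apply the standard duality of such polynomials under the longest element of the finite parabolic subgroup. The Fock space action of $U_v$ realises $e^+_{\lambda\mu}(v)$ as parabolic Kazhdan--Lusztig polynomials for the affine symmetric group $\tilde{S}_d$ acting on $\Z^d$ at level $e$, with parabolic $S_d$ of finite type, where $\ell_\mu$ is exactly the length of the longest element of the stabilizer (a standard parabolic of finite order) of the dominant representative of the $\tilde{S}_d$-orbit of $\mu + \rho_d$.

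The heart of the argument is to interpret the operations $\lambda \mapsto \tilde\lambda'$ and $\mu \mapsto \hat\mu'$ group-theoretically as an application of the longest element $w_0 \in S_d$ on the two indices. The translation $\tilde\lambda = \lambda + (e-1)(d-1,\dots,d-1)$, together with the reversal of the $e$-restricted part appearing in $\hat\mu = 2(e-1)\rho_d + (\mu^{(0)}_d,\dots,\mu^{(0)}_1) + e\mu^{(1)}$ and the subsequent conjugation of partitions, should compose to implement $w_0$ on the $\tilde{S}_d$-orbits, with the $2(e-1)\rho_d$ term providing the Steinberg-type shift that renders the reversal compatible with the level-$e$ action. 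Once this identification is in place, the standard Kazhdan--Lusztig duality for parabolic polynomials under $w_0$,
$$P^{S_d}_{x,y}(v) = v^{\ell(w_0)-\ell_y}\,P^{S_d}_{w_0 x, w_0 y}(v^{-1}),$$
together with $\ell(w_0) = d(d-1)/2$, yields the stated formula.

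The principal obstacle will be the combinatorial verification that $\mu \mapsto \hat\mu'$ and $\lambda \mapsto \tilde\lambda'$ jointly implement the $w_0$-action on the dominant representatives of the relevant orbits, since this requires tracking the interaction of conjugation, reversal, the shift $2(e-1)\rho_d$ and the shift $(e-1)(d-1,\dots,d-1)$ all through the level-$e$ action and confirming that the resulting length shift is exactly $d(d-1)/2 - \ell_\mu$. An alternative, more elementary route is to verify bar-invariance and the triangularity condition
$$v^{\tfrac{d(d-1)}{2}-\ell_\mu}\,e^+_{\tilde\lambda'\hat\mu'}(v^{-1}) \in s_\mu + \sum_{\lambda\triangleright\mu} v\,\Z[v]\,s_\lambda$$
of the right-hand side directly and invoke the uniqueness in the characterisation of $b^+_\mu$; but this ultimately reduces to the same combinatorial bookkeeping of the $\rho_d$-shifts under conjugation and reversal.
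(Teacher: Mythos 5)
First, note that the paper does not prove this statement at all: it is quoted verbatim from Leclerc (the sentence immediately preceding the theorem reads ``the following was proved in [L, Theorem 2]''), so the paper's ``proof'' is a citation. Your task was therefore to reconstruct Leclerc's argument, and what you have written is a plan rather than a proof: you yourself flag that ``the principal obstacle will be the combinatorial verification'' that $\mu\mapsto\hat\mu'$ and $\lambda\mapsto\tilde\lambda'$ implement the $w_0$-translation on the dominant representatives of the level-$e$ $\tilde{S}_d$-orbits, and that verification --- tracking the conjugation, the reversal of $\mu^{(0)}$, and the two $\rho_d$-type shifts through the affine action, and extracting the exponent $\tfrac{d(d-1)}{2}-\ell_\mu$ --- is precisely the content of the theorem. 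Deferring it means the proof has not been given. The same applies to your fallback route via bar-invariance and triangularity of the right-hand side: it is the correct uniqueness characterisation of $b^+_\mu$ to aim for (and is closer to how Leclerc actually argues, using the interplay of the two Leclerc--Thibon bases under the bar involution), but you again reduce it to ``the same combinatorial bookkeeping'' without doing it.

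There is a second, less visible gap: the identity you call a ``standard Kazhdan--Lusztig duality,'' $P^{S_d}_{x,y}(v)=v^{\ell(w_0)-\ell_y}P^{S_d}_{w_0x,w_0y}(v^{-1})$, is not one of the standard statements. Ordinary and parabolic Kazhdan--Lusztig polynomials are honest polynomials in $v$; the relations involving $v\mapsto v^{-1}$ are the inversion formulas pairing $P$ with the inverse polynomials $Q$, or, in the parabolic affine setting relevant here, the Soergel-type duality between the two Deodhar parabolic modules (equivalently, between tilting multiplicities and Jordan--H\"older multiplicities), which is exactly what the operations $T(\hat\mu)$ versus $W(\lambda)$ encode on the representation-theoretic side of this paper. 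That duality is true but is itself a theorem requiring proof or a precise citation, not a formula one can simply ``apply.'' So the proposal correctly locates the circle of ideas (Varagnolo--Vasserot's identification of $e^+_{\lambda\mu}$ with parabolic affine Kazhdan--Lusztig polynomials, the finite parabolic $S_d$ and its longest element, $\ell_\mu$ as the length of the longest element of the stabiliser), but both pillars of the argument --- the combinatorial identification and the duality formula --- are asserted rather than established.
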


The next theorem is the main result of this paper. 

\begin{thm}
Suppose that $F$ has characteristic zero, $q\in F^\times$ 
a primitive $e^{th}$ root of unity with $e\geq 4$. Then 
the Dipper-James' $q$-Schur algebra is a $\Z$-graded $F$-algebra and 
we have 
$$
d_{\lambda\mu}(v)=e^+_{\lambda\mu}(v^{-1}).
$$
In particular, $d_{\lambda\mu}(v)=d_{\lambda\mu}'(v)$ if $\mu$ is $e$-restricted. 
\end{thm}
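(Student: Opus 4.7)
The plan is to chain together the three transition formulas developed in this section. First, the proposition immediately preceding the theorem gives
$$
d_{\lambda\mu}(v)=v^{-\shift(\mu)}d'_{\tilde\lambda'\hat\mu'}(v^{-1}).
$$
Since $\hat\mu'$ is $e$-restricted (as noted just after the proposition), Lemma \ref{dual decomposition numbers} yields $d'_{\tilde\lambda'\hat\mu'}(v)=d_{\tilde\lambda'\hat\mu'}(v)$. The graded decomposition number on the right is now indexed by an $e$-restricted partition, so the formula from \cite[Corollary 5.15]{BK2} (as recalled in the paragraph preceding Leclerc's theorem) applies and gives $d_{\tilde\lambda'\hat\mu'}(v)=e^+_{\tilde\lambda'\hat\mu'}(v^{-1})$. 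Combining,
$$
d_{\lambda\mu}(v)=v^{-\shift(\mu)}\,e^+_{\tilde\lambda'\hat\mu'}(v).
$$

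Next I would apply Leclerc's symmetry theorem (the immediately preceding theorem) to rewrite $e^+_{\tilde\lambda'\hat\mu'}(v)$. Substituting $v\mapsto v^{-1}$ in the stated identity $e^+_{\lambda\mu}(v)=v^{d(d-1)/2-\ell_\mu}e^+_{\tilde\lambda'\hat\mu'}(v^{-1})$ gives
$$
e^+_{\tilde\lambda'\hat\mu'}(v)=v^{d(d-1)/2-\ell_\mu}\,e^+_{\lambda\mu}(v^{-1}).
$$
Inserting this into the previous display yields
$$
d_{\lambda\mu}(v)=v^{-\shift(\mu)+d(d-1)/2-\ell_\mu}\,e^+_{\lambda\mu}(v^{-1}).
$$
So the theorem reduces to the identity $\shift(\mu)=\tfrac{d(d-1)}{2}-\ell_\mu$ for every $\mu\vdash n$.

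The main obstacle, then, is pinning down this integer $\shift(\mu)$. The cleanest route is a self-consistency check: specialize the identity to $\lambda=\mu$. On the Fock space side, triangularity of the canonical basis gives $e^+_{\mu\mu}(v)=1$, hence $e^+_{\mu\mu}(v^{-1})=1$. On the Schur algebra side, the Weyl module $W(\mu)$ has simple head $L(\mu)$ sitting in degree $0$ by construction, and all other composition factors are of the form $L(\nu)[k]$ with $\nu\triangleleft\mu$; in particular $(W(\mu):L(\mu)[k])=\delta_{k,0}$, so $d_{\mu\mu}(v)=1$. The boxed identity at $\lambda=\mu$ therefore forces $-\shift(\mu)+\tfrac{d(d-1)}{2}-\ell_\mu=0$, i.e.\ $\shift(\mu)=\tfrac{d(d-1)}{2}-\ell_\mu$, which is exactly what we needed. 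Plugging this value back in yields the desired equality $d_{\lambda\mu}(v)=e^+_{\lambda\mu}(v^{-1})$ for all $\lambda,\mu\vdash n$.
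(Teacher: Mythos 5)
Your proposal is correct and follows essentially the same route as the paper: chain the preceding proposition, Lemma \ref{dual decomposition numbers}, the Brundan--Kleshchev formula for $e$-restricted partitions, and Leclerc's symmetry theorem, then specialize to $\lambda=\mu$ to pin down $\shift(\mu)=\tfrac{d(d-1)}{2}-\ell_\mu$. Your justification of $d_{\mu\mu}(v)=1$ and $e^+_{\mu\mu}(v)=1$ just makes explicit what the paper leaves implicit in ``we set $\lambda=\mu$ to deduce\dots''.
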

\begin{proof}
By the previous formulas, we have 
$$
d_{\lambda\mu}(v)=v^{-\shift(\mu)}e^+_{\tilde\lambda^t\hat\mu^t}(v)
=v^{-\shift(\mu)+\frac{d(d-1)}{2}-\ell_\mu}e^+_{\lambda\mu}(v^{-1}).
$$
We set $\lambda=\mu$ to deduce that $\shift(\mu)=\frac{d(d-1)}{2}-\ell_\mu$. 
\end{proof}

To summarize, the Leclerc-Thibon canonical basis which consists of $b^+_\mu$'s 
computes the graded decomposition numbers of the $q$-Schur algebra at $e^{th}$ 
roots of unity in a field of characteristic zero where $e\geq4$.

\section{Examples}

Let $e=4$ and $n=4$. We have five graded Specht modules. 
For each standard tableau ${\bf t}$, we denote the tableau by its reading word: 
the \emph{reading word} of ${\bf t}$ 
is the permutation of $1,\dots,n$ obtained by reading the entries 
from left to right, starting with the first row and ending with the last row. 
We write $v_{ijkl}$ for $v_{\bf t}$ when the reading word of ${\bf t}$ is 
$ijkl$. The degree $k$ part of $S(\lambda)$ is denoted by $S(\lambda)_k$. 
By permuting letters, we have the right action of the symmetric group $S_n$ 
on the set of tableaux. 

As $S((2,2))$ constitutes a semisimple block, we have 
$Y((2,2))=S((2,2))=D((2,2))$. 
In particular, the decomposition matrix for this block is $(1)$. 
For the grading, $S((2,2))=S((2,2))_0\oplus S((2,2))_1$, where 
$S((2,2))_0$ is spanned by $v_{1324}$ and $S((2,2))_1$ is spanned by 
$v_{1234}$. 

We consider the remaining four partitions. The action of 
$t_1,t_2,t_3,t_4$ and $\sigma_1$ are all zero on these graded Specht modules. 

\begin{itemize}
\item
$S((4))=S((4))_1$ and $v_{1234}\in S((4))e(0123)$. 
$\sigma_2$ and $\sigma_3$ act as zero. 

\item
$S((3,1))=S((3,1))_0\oplus S((3,1))_1$, where 
$S((3,1))_0$ is spanned by $v_{1234}\in S((3,1))e(0123)$, 
$S((3,1))_1$ is spanned by the two elements $v_{1243}\in S((3,1))e(0132)$ 
and $v_{1342}\in S((3,1))e(0312)$. Hence, we have the matrix representation of 
the idempotents, 
with respect to the basis $(v_{1234},v_{1243},v_{1342})$, as follows. 
Note that the matrices act on row vectors from the right hand side. 
\begin{gather*}
e(0123)=\begin{pmatrix} 1 & 0 & 0 \\ 0 & 0 & 0 \\ 0 & 0 & 0 \end{pmatrix},\quad
e(0132)=\begin{pmatrix} 0 & 0 & 0 \\ 0 & 1 & 0 \\ 0 & 0 & 0 \end{pmatrix}\\
\text{and}\quad 
e(0312)=\begin{pmatrix} 0 & 0 & 0 \\ 0 & 0 & 0 \\ 0 & 0 & 1 \end{pmatrix}.
\end{gather*}
The action of $\sigma_2$ and $\sigma_3$ is given by
$$
\sigma_2=\begin{pmatrix} 0 & 0 & 0 \\ 0 & 0 & 1 \\ 0 & 1 & 0 \end{pmatrix},\quad
\sigma_3=\begin{pmatrix} 0 & 1 & 0 \\ 0 & 0 & 0 \\ 0 & 0 & 0 \end{pmatrix}.
$$

\item
$S((2,1,1))=S((2,1,1))_0\oplus S((2,1,1))_1$, where 
$S((2,1,1))_0$ is spanned by $v_{1234}\in S((2,1,1))e(0132)$ and 
$v_{1324}\in S((2,1,1))e(0312)$, 
and $S((2,1,1))_1$ is spanned by 
$v_{1423}\in S((2,1,1))e(0321)$. Hence, with respect to the basis 
$(v_{1234},v_{1324},v_{1423})$, we have
\begin{gather*}
e(0132)=\begin{pmatrix} 1 & 0 & 0 \\ 0 & 0 & 0 \\ 0 & 0 & 0 \end{pmatrix},\quad
e(0312)=\begin{pmatrix} 0 & 0 & 0 \\ 0 & 1 & 0 \\ 0 & 0 & 0 \end{pmatrix}\\
\text{and}\quad 
e(0321)=\begin{pmatrix} 0 & 0 & 0 \\ 0 & 0 & 0 \\ 0 & 0 & 1 \end{pmatrix}.
\end{gather*}
The action of $\sigma_2$ and $\sigma_3$ 
is given by
$$
\sigma_2=\begin{pmatrix} 0 & 1 & 0 \\ 1 & 0 & 0 \\ 0 & 0 & 0 \end{pmatrix},\quad
\sigma_3=\begin{pmatrix} 0 & 0 & 0 \\ 0 & 0 & 1 \\ 0 & 0 & 0 \end{pmatrix}.
$$

\item
$S((1,1,1,1))=S((1,1,1,1))_0$ and $v_{1234}\in S((1,1,1,1))e(0321)$. 
$\sigma_2$ and $\sigma_3$ act as zero.               
\end{itemize}

Thus, we know the following.
\begin{itemize}
\item[(1)]
$S((1,1,1,1))=D((1,1,1,1))$ and $D((1,1,1,1))=D((1,1,1,1))_0$.
\item[(2)]
$S((2,1,1))$ contains a graded $\Hecke_4$-submodule 
$$
F(0,0,1)\simeq D((1,1,1,1))[-1]. 
$$
Write $D((2,1,1))=S((2,1,1))/D((1,1,1,1))[-1]$. Then we have $D((2,1,1))=D((2,1,1))_0$.
\item[(3)]
$S((3,1))$ contains a graded $\Hecke_4$-submodule 
$$
F(0,1,0)\oplus F(0,0,1)\simeq D((2,1,1))[-1]. 
$$
Then $D((3,1))=S((3,1))/D((2,1,1))[-1]$, $D((3,1))=D((3,1))_0$. 
\item[(4)]
$S((4))\simeq D((3,1))[-1]$.
\end{itemize}
If $\mu\neq(4)$, then $\mu$ is $e$-restricted and we may compute $d_{\lambda\mu}(v)$ by 
$$
d_{\lambda\mu}(v)=\sum_{k\in\Z}(S(\lambda): D(\mu)[k])v^k.
$$
If $\mu=(4)$, $L(\mu)$ appears only in 
$W(\lambda)$ with $\lambda\trianglerighteq\mu$ so that the only possibility is 
$d_{(4)(4)}=1$. Hence, we have obtained the graded decomposition matrix. 
In the table, we write $d_{\lambda\mu}(v^{-1})$ instead of $d_{\lambda\mu}(v)$, 
in order to compare it with the Leclerc-Thibon canonical basis which we will compute below. 
\begin{verbatim}
1^4  | 1
2,1^2| v 1
2^2  | . . 1
3,1  | . v . 1
4    | . . . v 1
\end{verbatim}
To phrase it in other terms, we have the following equations 
in the enriched Grothendieck group, 
in which we write the shift $[1]$ by $v^{-1}$. 
\begin{equation*}
\begin{split}
[W((1,1,1,1))]&=[L((1,1,1,1))],\\
[W((2,1,1))]&=v[L((1,1,1,1))]+[L((2,1,1))],\\
[W((2,2))]&=[L((2,2))],\\
[W((3,1))]&=v[L((2,1,1))]+[L((3,1))],\\
[W((4))]&=v[L((3,1))]+[L((4))].
\end{split}
\end{equation*}
Hence, we have the following equalities in the dual space 
of the enriched Grothendieck group. 

\begin{equation*}
%\label{table1}
\text{(Table 1)}\;
\begin{split}
[L((1,1,1,1))]^*&=v[W((2,1,1))]^*+[W((1,1,1,1))]^*,\\
[L((2,1,1))]^*&=v[W((3,1))]^*+[W((2,1,1))]^*,\\
[L((2,2))]^*&=[W((2,2))]^*,\\
[L((3,1))]^*&=v[W((4))]^*+[W((3,1))]^*,\\
[L((4))]^*&=[W((4))]^*.
\end{split}
\end{equation*}

We already know the decomposition matrix for the $q$-Schur algebra in the 
non-graded case. In the following table, the convention is the classical one, 
and the $(\lambda,\mu)^{th}$ entry is $d_{\lambda^t\mu^t}$. We confirm that it coincides with 
the specialization at $v=1$ of the graded decomposition matrix. 
\begin{verbatim}
gap> S:=Schur(4);
Schur(e=4, W(), P(), F(), Pq())
gap> DecompositionMatrix(S,4);
4    | 1
3,1  | 1 1
2^2  | . . 1
2,1^2| . 1 . 1
1^4  | . . . 1 1
\end{verbatim}

We turn to the Leclerc-Thibon canonical basis. We denote them by $G(\mu)$. 
If $\mu\neq (4)$, we may compute them by the LLT algorithm. If $\mu=(4)$ then 
we have $G(\mu)=s_\mu$ as $\mu$ has only one part. Thus, the canonical basis 
elements are given as follows.

\begin{equation*}
%\label{table2}
\text{(Table 2)}\;
\begin{split}
G((1,1,1,1))&=vs_{(2,1,1)}+s_{(1,1,1,1)}\;(=f_1f_2f_3f_0v_{\Lambda_0}),\\
G((2,1,1))&=vs_{(3,1)}+s_{(2,1,1)}\;(=f_2f_1f_3f_0v_{\Lambda_0}),\\
G((2,2))&=s_{(2,2)},\\
G((3,1))&=vs_{(4)}+s_{(3,1)}\;(=f_3f_2f_1f_0v_{\Lambda_0}),\\
G((4))&=s_{(4)}.
\end{split}
\end{equation*}
Comparing (Table 1) and (Table 2), we confirm that 
the coefficient matrices are identical. This example is rather an example for 
the Hecke algebra than an example for the $q$-Schur algebra, 
as we did not do any substantial calculation 
for the partitions which are not $e$-restricted. An interested reader 
may try larger size examples.

\end{document}